\newtheorem{theorem}{Theorem}
\providecommand{\customgenericname}{}
\newcommand{\newcustomtheorem}[2]{%
  \newenvironment{#1}[1]
  {%
   \ifdefined\crefalias\crefalias{innercustomgeneric}{#2}\fi
   \renewcommand\customgenericname{#2}%
   \renewcommand\theinnercustomgeneric{##1}%
   \innercustomgeneric
  }
  {\endinnercustomgeneric}%
  \ifdefined\crefname\crefname{#2}{#2}{#2s}\fi
}
\newtheorem{lemma}[theorem]{Lemma}  
\newtheorem{proposition}[theorem]{Proposition}
\newtheorem{corollary}[theorem]{Corollary}
\newtheorem{remark}[theorem]{Remark}
\newtheorem*{remark*}{Remark}
\newtheorem*{definition*}{Definition}
\newtheoremstyle{indented}{3pt}{3pt}{}{}{\bfseries}{.}{.5em}{}
\theoremstyle{indented}
\tikzset{pics/.cd,
handle/.style={code={
\draw  (-2,0) coordinate (-lpoint) 
to [out=100, in=300] (-2.8,2) 
to [out=120, in=250] (-3,4) 
to [out=70,in=180] (0,6) 
to [out=0,in=110] (3,4) 
to [out=290,in=60] (2.8,2) 
to [out=240,in=80] (2,0)  coordinate (-rpoint);
\draw (0,.75) .. controls (1,2) and (1,3.25) .. (0,4.25);
\draw (0.2,1) .. controls (-.75,2.25) and (-.75,3.25) .. (0.25,4);
\draw[blue,->, line width=3pt] (0,-pi) coordinate  
to [out=100, in=270] (-1.5,3.25) 
to [out=90, in=180] (0,5) ;
\draw[blue, line width=3pt]  (0,5) coordinate  
to [out=0,in=90] (1.5,3.25) 
to [out=270,in=80] (0,-pi);
\draw[red,->, line width=3pt] (0,-pi) coordinate  
to [out=60, in=230] (1.5,-1) ;
\draw[red, line width=3pt] (1.5,-1) coordinate  
to [out=50, in=250] (2,0) ;
\draw[red, loosely dotted,line width=3pt] (2,0) coordinate  
to [out=100, in=80] (.25,1) ;
\draw[red, line width=3pt] (.25,1) coordinate  
to [out=260, in=90] (0,-pi) ;
\filldraw[black] (0,-pi) circle (6pt);
}}}
\tikzset{pics/.cd,
handle2/.style={code={
\draw  (-2,0) coordinate (-lpoint2) 
to [out=100, in=300] (-2.8,2) 
to [out=120, in=250] (-3,4) 
to [out=70,in=180] (0,6) 
to [out=0,in=110] (3,4) 
to [out=290,in=60] (2.8,2) 
to [out=240,in=80] (2,0)  coordinate (-rpoint2);
\draw (0,.75) .. controls (1,2) and (1,3.25) .. (0,4.25);
\draw (0.2,1) .. controls (-.75,2.25) and (-.75,3.25) .. (0.25,4);
\filldraw[black] (0,-pi) circle (6pt);
}}}
\newcommand{\bb}[1]{\mathbb{#1}}
\newcommand{\defeq}{\mathrel{\mathpalette{\vcenter{\hbox{$:$}}}=}}
\newcommand{\SU}{\mathrm{SU}}
\newcommand{\U}{\mathrm{U}}
\newcommand{\X}{{\mathfrak{X}}}
\newcommand{\C}{{\mathcal{C}}}
\newcommand{\D}{{\mathcal{D}}}
\newcommand{\F}{{\mathcal{F}}}
\newcommand{\R}{{\mathcal{R}}}
\title{Representation varieties and genus-three Torelli maps}
\author[A.\ Bao]{Allen\ Bao}
\address[Bao]{Department of Mathematics, University of Maryland, College Park, MD 20742}
\email{abao12@terpmail.umd.edu}
\author[A.\ Chakraborty]{Anunoy\ Chakraborty}
\address[Chabraborty]{Mathematical Sciences Department, George Mason University, Fairfax, VA 22030}
\email{achakra5@gmu.edu}
\author[D.\ L.\ Duncan]{David\ L.\ Duncan}
\address[Duncan]{Department of Mathematics \& Statistics, James Madison University, Harrisonburg, VA 22801}
\email{duncandl@jmu.edu}
\author[J.\ Larson]{Jordan\ Larson}
\address[Larson]{Department of Mathematics, Purdue University, West Lafayette, IN 47907}
\email{tlarson.jordan@gmail.com}
\author[K.\ McBride]{Kelson\ McBride}
\address[McBride]{Department of Mathematics and Statistics, Binghamton University, Binghamton, NY 13902}
\email{kelsondmcbride@gmail.com}
\begin{document}

\begin{abstract}
We consider the family of Torelli homeomorphisms on a genus-three surface given by powers of a fixed bounding pair map. For each such homeomorphism $\phi$ we determine the number of connected components of the fixed point set of the induced map on the representation variety of the surface, as well as the number of connected components of the representation variety of the mapping torus of $\phi$. 
\end{abstract}

\maketitle

In the latter half of the 20th century, representation varieties became increasingly commonplace in the low-dimensional topologist's toolkit, appearing in various capacities from deformation spaces for geometric structures \cite{Thurston} to manifold invariants arising in gauge-theoretic constructions \cite{CS} \cite{AB} \cite{AM} \cite{Taubes} \cite{Floer}. Even relatively simple topological features of the $\SU(2)$-representation variety $\R(Y)$ and its descendant, the character variety $\X(Y)$, can detect interesting features of the underlying manifold $Y$. For example, when $Y$ is a 3-manifold, if $\R(Y)$ is connected, then the Chern--Simons invariant of $Y$ is trivial. In the category of 3-manifolds, it is generally understood that $\R(Y)$ encodes considerable information about the topology of $Y$ itself, but determining precisely what information is encoded is still an active area of research \cite{BS} \cite{BLSY} \cite{LP-CZ} \cite{SZ2} \cite{SZ}.

As exemplified through work of Thurston \cite{Thurston2} and others, a particularly rich class of 3-manifolds are those that can be written $Y = \Sigma_\phi$ as the mapping torus of a homeomorphism $\phi: \Sigma \rightarrow \Sigma$ of a surface $\Sigma$ (unless otherwise specified, all manifolds are connected, oriented, and compact without boundary). Recall that the Torelli group $\mathcal{I}(\Sigma)$ consists of (the isotopy classes of) those homeomorphisms of $\Sigma$ that induce the identity on homology. When $\Sigma$ has genus two it was shown by McCullough--Miller \cite{MM} that $\mathcal{I}(\Sigma)$ is not finitely-generated; they also gave a concrete generating set in terms of Dehn twists $T_\gamma$ about certain bounding curves $\gamma \subseteq \Sigma$. It was shown by Daveler \cite{D} that $\R(\Sigma_\phi)$ is connected for each $\phi$ in the McCullough--Miller generating set. On the other hand, it is trivially the case that in genus one all Torelli $\phi$ have $\R(\Sigma_\phi)$ connected, so this makes one wonder: 

\begin{customques}{1}\label{ques:1} 
Is $\R(\Sigma_\phi)$ connected when $\phi$ is Torelli?
\end{customques}

\noindent Here we show that the answer is: \emph{No!}

To elaborate, when the genus of $\Sigma$ is at least three, Dehn twists about bounding curves are not sufficient to generate $\mathcal{I}(\Sigma)$. For example, one cannot \cite{P} get \emph{bounding pair maps}, which are maps of the form $T_{\gamma_1} \circ T_{\gamma_2}^{-1}$, with $\gamma_1, \gamma_2$ disjoint, simple curves in $\Sigma$ with the property that $\gamma_1 \cup \gamma_2$ bounds. With this, our first main result is as follows.

\begin{customthm}{A}\label{thm:A}
Suppose $\Sigma$ has genus three and $\Phi = T_{\gamma_1}\circ T_{\gamma_2}^{-1}$ is the bounding pair map with $\gamma_1, \gamma_2$ as illustrated in Figure \ref{fig:1}. For each $n \in \bb{Z}$, the representation variety $\R(\Sigma_{\Phi^n})$ and character variety $\chi(\Sigma_{\Phi^n})$ each have $n^2 + 1$ (resp. $n^2$) connected components when $n$ is even (resp. odd). 
\end{customthm}

Returning to \cref{ques:1}, it follows from \cref{thm:A} that, e.g., the map $\Phi^2 = \Phi \circ \Phi$ is a Torelli map with $\R(\Sigma_{\Phi^2})$ not connected. Our proof, which appears in Section \ref{sec:ProofOfTheoremA}, is constructive and gives a cover of the representation variety from which much of its topological and geometric structure can be seen. For example, from it one can construct explicit representations in each connected component, and one can also detect the dimension of the smooth stratum. Our analysis also highlights the components (there is only one) containing abelian representations.

\begin{figure}
\resizebox{7cm}{6cm}{
\begin{tikzpicture}
\pic (upper2) at (0,pi) {handle2};
\pic[rotate=120] (bl2) at (30:-pi) {handle2};
\pic[rotate=-120] (br2) at (150:-pi) {handle2};
\draw (upper2-lpoint2) to[out=280,in=20] (bl2-rpoint2);
\draw (upper2-rpoint2) to[out=260,in=160] (br2-lpoint2);
\draw (bl2-lpoint2) to[out=45,in=135] (br2-rpoint2);
\draw[line width=3pt] (.25,7) coordinate  
to [out=50, in=310] (.25,9.15) ;
\draw[loosely dotted,line width=3pt] (.25,7) coordinate  
to [out=140, in=220] (.25,9.15) ; 
\draw[line width=3pt] (.25,4.25) coordinate  
to [out=320, in=40] (.25,-2.65) ;
\draw[loosely dotted,line width=3pt] (.25,4.25) coordinate  
to [out=220, in=140] (.25,-2.65) ; 
\node (x0) at (.8,-.3) [ label={\Huge $x_0$}] {};
\node (gamma1) at (1.2,7.6) [ label={\Huge $\gamma_1$}] {};
\node (gamma2) at (1.85,-2) [ label={\Huge $\gamma_2$}] {};
\end{tikzpicture}}
\caption{Illustrated here are the (unoriented) curves $\gamma_1, \gamma_2$ relative to which the bounding pair map $\Phi = T_{\gamma_1} \circ T_{\gamma_2}^{-1}$ is formed. Note that the basepoint $x_0$ appears here to the \emph{left} of $\gamma_2$ (this is the same basepoint from Figure \ref{fig:2}, and appears on the ``$\alpha_2$-$\beta_2$'' side of $\gamma_2$). Since $x_0$ does not lie on either of $\gamma_1$ or $\gamma_2$, we can define the Dehn twists $T_{\gamma_i}$ to have support near $x_0$, and thus $\Phi(x_0) = x_0$.}\label{fig:1}
\end{figure}
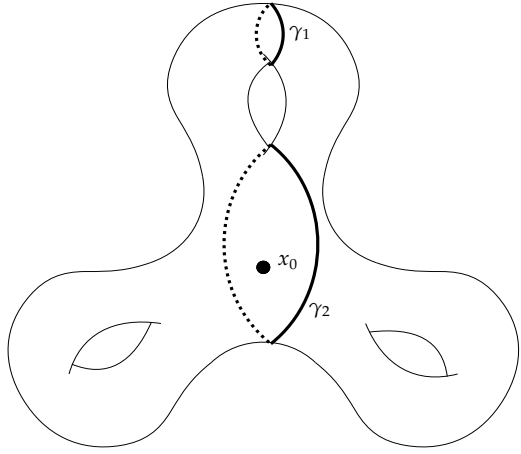

Closely related to the representation variety of the 3-manifold $\Sigma_{\phi}$ is the fixed point set $\mathrm{Fix}_\R(\phi^{*})$ of the map $\phi^{*}: \R(\Sigma) \rightarrow \R(\Sigma)$ given by pullback by $\phi$; there is also its character variety cousin $\mathrm{Fix}_\X(\phi^{*}) \subseteq \X(\Sigma)$. At the character variety level, there is a natural surjection $\X(\Sigma_\phi) \twoheadrightarrow \mathrm{Fix}_\X(\phi^{*})$; see Lemma \ref{lem:charrest}. It is through this surjection that techniques in symplectic geometry \cite{Atiyah} \cite{DS} and ergodic theory \cite{G} can be introduced into low-dimensional topology. In Section \ref{sec:ProofOfTheoremB} we will use this surjection to deduce the size of $\pi_0(\mathrm{Fix}_\X(\phi^{*}))$ from that of $\pi_0(\X(\Sigma_\phi))$:

\begin{customthm}{B}\label{thm:B}
Let $\Phi$ and $n$ be as in \cref{thm:A}. The fixed point set $\mathrm{Fix}_\X((\Phi^n)^*)$ has $n^2/2 + 1$ (resp. $(n^2 + 1)/2$) connected components when $n$ is even (resp. odd). 
\end{customthm}

At the representation variety level, in place of the surjection $\X(\Sigma_\phi) \twoheadrightarrow \mathrm{Fix}_\X(\phi^{*})$ we have an embedding $\mathrm{Fix}_\R(\phi^{*}) \hookrightarrow \R(\Sigma_\phi)$, but this generally does \emph{not} yield a straight-forward way to compute $\pi_0(\mathrm{Fix}_\R(\phi^{*}))$ from $\pi_0(\R(\Sigma_\phi))$. As such, a separate analysis of $\mathrm{Fix}_\R(\phi^{*})$ is often required to determine its connected components. We will return to this in a moment.

The $\SU(2)$-representation variety $\R(\Sigma)$ can be viewed as a nonlinear version of the first cohomology of $\Sigma$ \cite[\S 3.1]{Atiyah}. Given that Torelli homeomorphisms $\phi$ act trivially on $H^1(\Sigma)$, one might expect that the action of $\phi$ on $\R(\Sigma)$ is correspondingly simple. For example, an analogue of \cref{ques:1} might be:

\begin{customques}{2}\label{ques:2} 
Is $\mathrm{Fix}_\R(\phi^*)$ connected when $\phi$ is Torelli?
\end{customques}

Once again, we show the answer is \emph{no} by carrying out the aforementioned analysis of the connected components of $\R(\Sigma)$.

\begin{customthm}{C}\label{thm:C}
Let $\Phi$ and $n$ be as in \cref{thm:A}. The fixed point set $\mathrm{Fix}_\R((\Phi^n)^*)$ has $n^2/2 + 1$ (resp. $(n^2 + 1)/2$) connected components when $n$ is even (resp. odd). 
\end{customthm}

The proof of \cref{thm:C} is given in Section \ref{sec:ProofOfTheoremC}, after we review the relevant preliminary material.

\begin{remark*}
(a) All of the spaces we consider are varieties or manifolds and hence are locally path connected. As such, the connected components are the path components and so there is no need to distinguish between these in the notation. 

\medskip

(b) A key feature of $\SU(2)$ we use is the following: If $A \in \SU(2)$ is not central, then its centralizer is abelian (in fact, it is isomorphic to $S^1$, but we don't use this as often). Much of our analysis extends directly to other Lie groups with this property, such as $\SU(n)$ and $\U(n)$; we leave the details of such extensions to the interested reader.

\medskip

(c) We can see from Theorems \ref{thm:B} and \ref{thm:C} that $\mathrm{Fix}_\R((\Phi^n)^*)$ and $\mathrm{Fix}_\X((\Phi^n)^*)$ have the same number of connected components. We caution the reader that the number of components of $\mathrm{Fix}_\R(\phi^*)$ and $\mathrm{Fix}_\X(\phi^*)$ need not be equal for other functions $\phi$. 
\end{remark*}

\medskip

\textbf{Acknowledgements}: This work was completed as part of an REU hosted at James Madison University (JMU) and supported by the National Science Foundation (NSF) through NSF Grant Number DMS 2349593. We are grateful to both JMU and the NSF for their support.

\section{Preliminary material}

\subsection{Mapping tori}

Suppose $X$ is a topological space and $\phi: X \rightarrow X$ is a homeomorphism. The \emph{mapping torus} of $\phi$ is the quotient space 
$$X_\phi \defeq \frac{[0, 1 ] \times X}{(1, x) \sim (0, \phi(x))}.$$
Fix a basepoint $x_0 \in X$ and identify it with the image of $(0, x_0)$ in $X_\phi$. Assume that $\phi(x_0) = x_0$; when $X$ is a connected manifold this can always be arranged by replacing $\phi$ by a homeomorphism isotopic to $\phi$. The fundamental group $\pi_1(X_\phi, x_0)$ has a presentation 
\begin{equation}\label{eq:presformt}
\Big\langle \tau, \gamma \; \Big| \;   \phi_* \gamma = \tau^{-1} \gamma \tau , \;\;\forall \gamma \in \pi_1(X, x_0)\Big\rangle.
\end{equation}

We will be particularly interested in the case where $X = \Sigma$ is a surface of genus $g$. In this case we will work relative to a \emph{standard presentation} for $\pi_1(\Sigma, x_0)$
$$\Big\langle \alpha_1, \beta_1, \ldots, \alpha_g, \beta_g \; \Big| \; \prod_{i = 1}^g [\alpha_i, \beta_i] = 1 \Big\rangle$$
where $[\alpha, \beta] = \alpha \beta \alpha^{-1} \beta^{-1}$ is the group commutator and $1$ is the identity. In the case $g = 3$, explicit representatives for the generators in the standard presentation are illustrated in Figure \ref{fig:2}. It follows from the above discussion that, when $g \geq 1$, the fundamental group $\pi_1(\Sigma_\phi)$ has presentation
$$\Big\langle \tau, \alpha_1, \beta_1, \ldots, \alpha_g, \beta_g \; \Big| \; \prod_{i =1}^g [\alpha_i , \beta_i] = 1, \;\;  \phi_* \alpha_i = \tau^{-1} \alpha_i \tau, \;\; \phi_* \beta_i = \tau^{-1} \beta_i \tau, \;\; \forall i\Big\rangle.$$

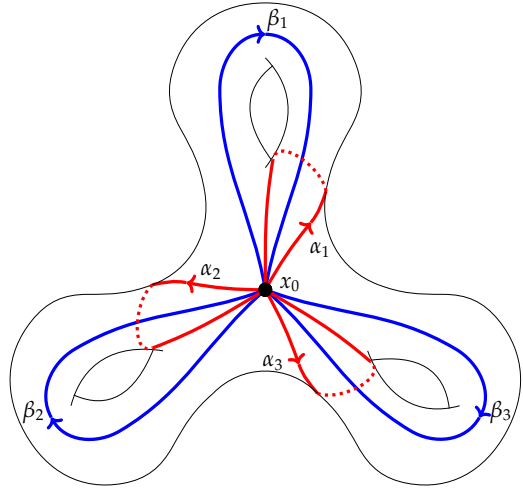
\begin{figure}
\resizebox{7cm}{6.5cm}{
\begin{tikzpicture}
\pic (upper) at (0,pi) {handle};
\pic[rotate=120] (bl) at (30:-pi) {handle};
\pic[rotate=-120] (br) at (150:-pi) {handle};
\draw (upper-lpoint) to[out=280,in=20] (bl-rpoint);
\draw (upper-rpoint) to[out=260,in=160] (br-lpoint);
\draw (bl-lpoint) to[out=45,in=135] (br-rpoint); 
\node (x0) at (.8,-.3) [ label={\Huge $x_0$}] {};
\node (alpha1) at (1.8,.8) [ label={\Huge $\alpha_1$}] {};
\node (beta1) at (.4,8) [ label={\Huge $\beta_1$}] {};
\node (alpha2) at (-1.8,.1) [ label={\Huge $\alpha_2$}] {};
\node (beta2) at (-7.65,-4.5) [ label={\Huge $\beta_2$}] {};
\node (alpha3) at (.25,-2.8) [ label={\Huge $\alpha_3$}] {};
\node (beta3) at (7.75,-4.5) [ label={\Huge $\beta_3$}] {};
\end{tikzpicture}}
\caption{Illustrated here is our generating set $\alpha_i, \beta_i$ for the fundamental group of our genus-three surface $\Sigma$. These all have basepoint $x_0$ (in the middle of the figure) and, with the orientations as indicated, these generators satisfy the relation $\prod_{i = 1}^3 [\alpha_i, \beta_i ] = 1$. As such, these give a standard presentation for $\pi_1(\Sigma, x_0)$.}
\label{fig:2}
\end{figure}

\subsection{The representation and character varieties}

Let $G$ be a compact Lie group and $\pi$ a finitely-presented group. The \emph{$G$-representation variety of $\pi$} is the set
$$\R(\pi, G) \defeq \hom(\pi, G)$$
of group homomorphisms from $\pi$ to $G$. We endow this with the compact-open topology, relative to the discrete topology on $\pi$. Given a presentation of the form
$$\pi = \langle \gamma_1, \ldots, \gamma_J \; \vert \; R_k(\gamma_1, \ldots, \gamma_K) = 1,\;\; 1 \leq k \leq K \rangle,$$
we have an embedding
\begin{equation}\label{eq:ident}
\R(\pi, G) \longrightarrow G^J, \hspace{1cm} \rho \longmapsto (\rho(\gamma_1), \ldots, \rho(\gamma_J)).
\end{equation}
This has image the set of $(g_1, \ldots, g_J) \in G^J$ satisfying $R_k(g_1, \ldots, g_J) = 1$ for $1 \leq k \leq K$ (the same relations that appear in our presentation of $\pi$). We will identify $\R(\pi, G)$ with its image under (\ref{eq:ident}), often omitting direct reference to the map (\ref{eq:ident}); we trust the chance for confusion is minimal since our presentation will be fixed at the outset. 

The group $G$ acts on $\R(\pi, G)$ by conjugation on the codomain, and we denote the quotient space by
$$\X(\pi, G) \defeq \R(\pi, G) / G,$$
which we call the \emph{character variety}. The map (\ref{eq:ident}) is equivariant relative to the action of $G$ on $G^J$ by diagonal conjugation, and so (\ref{eq:ident}) descends to give a homeomorphism of $\X(\pi, G)$ onto the set of $G$-equivalence classes of tuples $(g_1, \ldots, g_J) \in G^J$ satisfying the same relations $R_k$ as above. In particular, when $G$ is connected the projection $\R(\pi, G) \rightarrow \X(\pi, G)$ induces a bijection
$$\pi_0(\R(\pi, G)) \cong \pi_0(\X(\pi, G)).$$

We will be particularly interested in the case where $\pi = \pi_1(X, x_0)$ is the fundamental group of a compact, connected topological space $X$, in which case we set
$$\R(X, G) \defeq \R(\pi_1(X, x_0), G), \hspace{1cm} \X(X, G) \defeq \X(\pi_1(X, x_0), G).$$
When $G = \SU(2)$ we drop it from the notation: $\R(X) \defeq \R(X, \SU(2))$ and $\X(X) \defeq \X(X, \SU(2))$. 

\subsection{Fixed point sets}\label{sec:FixedPointSets}

Suppose $X$ is a topological space and $\phi: X \rightarrow X$ is a homeomorphism preserving a fixed basepoint $x_0 \in X$. This induces a homeomorphism
$$\phi^*: \R(X, G) \longrightarrow \R(X, G) \hspace{1cm} \rho \longmapsto \rho \circ \phi_*$$
called the \emph{pullback}, where $\phi_*$ is the isomorphism of $\pi_1(X, x_0)$ associated to $\phi$. The pullback is equivariant relative to the action of $G$ and so descends to a map $\X(X, G) \rightarrow \X(X, G)$ on the character variety that we also call the \emph{pullback} and denote by the same symbol $\phi^*$. We will be interested in the fixed point sets of both variations of pullback, which we denote by
$$\begin{array}{rcl}
\mathrm{Fix}_\R(\phi) & \defeq & \left\{ \rho \in \R(X, G) \; \vert \; \phi^* \rho = \rho \right\}\\
\mathrm{Fix}_\X(\phi) & \defeq & \left\{ [\rho] \in \X(X, G) \; \vert \; \phi^* [\rho] = [\rho] \right\}
\end{array}$$
and equip with the subspace topology. Pullback by the fiber inclusion $X \hookrightarrow X_\phi$ gives a map
\begin{equation}\label{eq:Xsurj}
\X(X_\phi, G) \longrightarrow \mathrm{Fix}_\X(\phi).
\end{equation}

\begin{lemma}\label{lem:charrest}
The map (\ref{eq:Xsurj}) is surjective and the fiber over $[\rho] \in \mathrm{Fix}_\X(\phi)$ is homeomorphic to the centralizer of $\rho$ (i.e., the set of elements of $G$ that commute with the image of $\rho$ in $G$).
\end{lemma}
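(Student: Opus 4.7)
The plan is to use the presentation (\ref{eq:presformt}) of $\pi_1(X_\phi)$ to identify $\R(X_\phi, G)$ with the set of pairs $(\rho, T) \in \R(X, G) \times G$ satisfying the twisted constraint $T^{-1} \rho T = \phi^* \rho$, where $\rho = \tilde\rho|_{\pi_1(X)}$ and $T = \tilde\rho(\tau)$. Under this identification the fiber-inclusion pullback becomes the first projection $(\rho, T) \mapsto \rho$, and the $G$-action on $\R(X_\phi, G)$ becomes the diagonal conjugation $g \cdot (\rho, T) = (g \rho g^{-1}, g T g^{-1})$.

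Surjectivity of (\ref{eq:Xsurj}) is then almost immediate, since the hypothesis $[\rho] \in \mathrm{Fix}_\X(\phi)$ literally supplies some $T_0 \in G$ with $T_0^{-1} \rho T_0 = \phi^* \rho$; the pair $(\rho, T_0)$ then yields a class in $\X(X_\phi, G)$ mapping to $[\rho]$.

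For the fiber description, I would fix a representative $\rho$ and a valid $T_0$, then classify the preimage of $[\rho]$ in $\R(X_\phi, G)$ modulo $G$-conjugation. After using $G$-conjugation to normalize the first coordinate to $\rho$, the residual symmetry is exactly the centralizer $Z(\rho)$, and the admissible $T$'s fill out the left coset $Z(\rho) T_0$, since $T_1^{-1} \rho T_1 = T_2^{-1} \rho T_2$ holds iff $T_1 T_2^{-1} \in Z(\rho)$. The translation $T \mapsto T T_0^{-1}$ is the natural candidate homeomorphism from this coset onto $Z(\rho)$.

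The main obstacle will be verifying that the residual $Z(\rho)$-conjugation action on $T \in Z(\rho) T_0$ is absorbed by this identification rather than collapsing it further. The key input is that conjugation by $T_0$ preserves $Z(\rho)$: indeed $\mathrm{Ad}(T_0^{-1})$ carries the image of $\rho$ to the image of $\phi^* \rho$, and those two images coincide because $\phi_*$ is an automorphism of $\pi_1(X)$. In the non-central regime emphasized in Remark (b), where $Z(\rho)$ is abelian, the residual action rewrites under $T \mapsto T T_0^{-1}$ as a twisted conjugation on $Z(\rho)$ that is trivial precisely when $\mathrm{Ad}(T_0)$ acts as the identity on $Z(\rho)$; this in turn can be arranged by an appropriate choice of $T_0$ within its coset (for instance, $T_0 \in Z(\rho)$ works whenever $\phi^* \rho = \rho$ on the nose, as is automatic for Torelli $\phi$ acting on abelian representations), completing the identification of the fiber with $Z(\rho)$.
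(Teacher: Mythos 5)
Your approach is essentially the paper's: both identify $\R(X_\phi,G)$ with pairs $(T,\rho)$ subject to the twisting relation $T^{-1}\rho T = \phi^*\rho$, get surjectivity for free from the definition of $\mathrm{Fix}_\X(\phi)$, and describe the fiber by normalizing the $\rho$-slot and observing that the admissible $T$'s form the coset $Z(\rho)T_0$. Where you differ is in the amount of care at the crucial step: the paper simply asserts that the $Z(\rho)$-action $S\cdot[T,\rho]=[ST,\rho]$ is ``free and transitive,'' whereas you correctly isolate the real issue --- after fixing a representative $\rho$, the coset $Z(\rho)T_0$ must still be quotiented by the residual conjugation action of the stabilizer $Z(\rho)$, and the translation $T\mapsto TT_0^{-1}$ identifies the fiber with $Z(\rho)$ only if that residual action is trivial.

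Your proposed fix, however, contains a misconception. You claim triviality of conjugation by $T_0$ on $Z(\rho)$ ``can be arranged by an appropriate choice of $T_0$ within its coset.'' When $Z(\rho)$ is abelian, nothing can be arranged: replacing $T_0$ by $ST_0$ with $S\in Z(\rho)$ changes $\mathrm{Ad}(T_0)\vert_{Z(\rho)}$ by $\mathrm{Ad}(S)\vert_{Z(\rho)}$, which is the identity, so the restriction is the same for every element of the coset. The correct observation (buried in your parenthetical) is that when $\phi^*\rho=\rho$ the twisting constraint forces \emph{every} admissible $T_0$ to lie in $Z(\rho)$, and when $\rho$ is irreducible $Z(\rho)$ is central --- in both cases $\mathrm{Ad}(T_0)$ is automatically trivial on $Z(\rho)$, and these are precisely the cases that occur for the Torelli maps studied here. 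But neither you nor the paper disposes of the remaining possibilities, and there the stated homeomorphism can genuinely fail: if $\rho$ is central, $Z(\rho)=G$ is non-abelian, the residual action is honest conjugation of $G$ on $G$, and the fiber is $G/\mathrm{Ad}(G)$ rather than $G$ (already a problem for the trivial representation with $G=\SU(2)$); and if $T_0$ acts on the maximal torus $Z(\rho)$ by a nontrivial Weyl element, the residual action likewise collapses the coset. In short, you have located a soft spot that the paper's own one-line ``free and transitive'' glosses over; your instinct to worry about it was right, but the ``choose $T_0$ suitably'' repair does not work, and a complete proof must either restrict $\rho$ and $\phi$ or handle the degenerate cases head-on.
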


\begin{proof}
If $[\rho] \in \mathrm{Fix}_\X(\phi)$, then there is some $T \in G$ so that $\phi^* \rho = T^{-1} \rho T$. This implies $[T, \rho] \in \X(X_\phi, G)$ is in the fiber of (\ref{eq:Xsurj}) over $[\rho]$ and so (\ref{eq:Xsurj}) is surjective. If $S \in G$ commutes with every element in the image of $\rho$, and $[T, \rho] \in \X(X_\phi, G)$, then $[ST, \rho] \in \X(X_\phi, G)$. It follows that the centralizer of $\rho$ acts on the fibers of (\ref{eq:Xsurj}) over $[\rho]$. This action is free and transitive on this fiber, so the lemma follows. 
\end{proof}

As an example, when $G = \SU(2)$ the fiber of (\ref{eq:Xsurj}) is connected whenever $\rho$ is \emph{abelian}, meaning $\rho$ takes values in an abelian subgroup of $\SU(2)$; the fiber consists of two points when $\rho$ is not abelian. 

As for the representation varieties, we still have an inclusion
$$\mathrm{Fix}_\R(\phi) \longhookrightarrow \R(X_\phi, G) \cong \left\{(T, \rho) \in G \times \R(X, G) \; \Big| \; \phi^* \rho = T^{-1} \rho T\right\}$$
given by sending $\rho$ to $(1, \rho)$, though this will generally \emph{not} be surjective. We find it is convenient to introduce the \emph{extended $\R$-fixed point set}
$$\widetilde{\mathrm{Fix}}_\R(\phi) \defeq \left\{ \rho \in  \R(X, G) \; \Big| \; \exists T \in G, \; \phi^* \rho = T^{-1} \rho T \right\}.$$
The projection $(T, \rho) \mapsto \rho$ induces a surjective map 
\begin{equation}\label{eq:Rsurj}\R(X_\phi, G) \longrightarrow \widetilde{\mathrm{Fix}}_\R(\phi)
\end{equation}
and essentially the same proof given for Lemma \ref{lem:charrest} shows the following.

\begin{lemma}\label{lem:charrest2}
The fiber of (\ref{eq:Rsurj}) over $\rho \in \widetilde{\mathrm{Fix}}_\R(\phi)$ is homeomorphic to the centralizer of $\rho$.
\end{lemma}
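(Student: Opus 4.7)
The plan is to mirror the argument given for \cref{lem:charrest}, now at the level of representation varieties. Using the identification
\[
\R(X_\phi, G) \;\cong\; \{(T, \rho) \in G \times \R(X, G) : \phi^*\rho = T^{-1}\rho T\},
\]
the fiber of the projection (\ref{eq:Rsurj}) over a given $\rho \in \widetilde{\mathrm{Fix}}_\R(\phi)$ is naturally identified with the closed subspace
\[
F_\rho \;=\; \{T \in G : \phi^*\rho = T^{-1}\rho T\} \;\subseteq\; G,
\]
and by definition of $\widetilde{\mathrm{Fix}}_\R(\phi)$ there exists at least one $T_0 \in F_\rho$.

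The key step is to exhibit a free and transitive action of the centralizer $Z(\rho)$ of (the image of) $\rho$ on $F_\rho$. I would take this action to be left multiplication: for $S \in Z(\rho)$ and $T \in F_\rho$, set $S \cdot T = ST$. The computation $(ST)^{-1} \rho (ST) = T^{-1} S^{-1} \rho S T = T^{-1} \rho T = \phi^* \rho$ shows this preserves $F_\rho$. Freeness is immediate, and for transitivity: if $T_1, T_2 \in F_\rho$, then $T_1^{-1} \rho T_1 = T_2^{-1} \rho T_2$ rearranges to show that $T_2 T_1^{-1}$ commutes with every element in the image of $\rho$, hence lies in $Z(\rho)$. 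Thus the orbit map $S \mapsto S T_0$ is a continuous bijection $Z(\rho) \to F_\rho$, and since $G$ (and hence $Z(\rho)$) is compact Hausdorff, this continuous bijection is automatically a homeomorphism.

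There is no serious obstacle here: the argument is essentially a direct transcription of the one for \cref{lem:charrest}, and is arguably slightly simpler because there is no conjugation quotient to track at the representation-variety level. The only mild bookkeeping point is verifying that the subspace topology $F_\rho$ inherits as a fiber of (\ref{eq:Rsurj}) coincides with its topology as a subset of $G$ under the projection $(T,\rho)\mapsto T$, which is immediate from the definition of the product topology.
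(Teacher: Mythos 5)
Your proof is correct and follows essentially the same approach as the paper, which proves \cref{lem:charrest2} by citing the argument for \cref{lem:charrest}: the centralizer of $\rho$ acts freely and transitively on the fiber by left multiplication on the $T$-coordinate. You have simply spelled out this argument in the representation-variety setting and added the (correct) observation that a continuous bijection from a compact space to a Hausdorff space is a homeomorphism.
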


The usefulness of the extended $\R$-fixed point set is that it interpolates between the representation variety and the character variety pictures as follows: 
The space $\widetilde{\mathrm{Fix}}_\R(\phi) \subseteq \R(X, G)$ is $G$-invariant and the quotient 
$$\widetilde{\mathrm{Fix}}_\R(\phi) / G \cong \mathrm{Fix}_\X(\phi)$$
can be naturally identified with the fixed point set of $\phi$ on the character variety. In summary, we have a commutative diagram

\begin{tikzcd}
\mathrm{Fix}_\R(\phi) \arrow[r, hookrightarrow] & \R(X_\phi, G) \arrow[r, twoheadrightarrow] \arrow[d, twoheadrightarrow] & \widetilde{\mathrm{Fix}}_\R(\phi) \arrow[d, twoheadrightarrow] \arrow[r, hookrightarrow]& \R(X, G) \arrow[d, twoheadrightarrow]\\
& \X(X_\phi, G) \arrow[r, twoheadrightarrow] &  \mathrm{Fix}_\X(\phi) \arrow[r, hookrightarrow]& \X(X, G)
\end{tikzcd}

\noindent and a useful result.

\begin{proposition}\label{prop:bij}
When $G$ is connected there are bijections
$$\pi_0(\R(X_\phi, G)) \cong \pi_0(\X(X_\phi, G)), \hspace{1cm} \pi_0(\mathrm{Fix}_\X(\phi)) \cong \pi_0(\widetilde{\mathrm{Fix}}_\R(\phi)).$$
\end{proposition}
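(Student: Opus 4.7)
The plan is to reduce both bijections to a single general principle about quotients by connected group actions, and then verify that this principle applies in each case.

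First I would state and prove the following lemma: if $G$ is a connected topological group acting continuously on a space $Y$, then the quotient map $\pi : Y \to Y/G$ induces a bijection $\pi_0(Y) \to \pi_0(Y/G)$. Surjectivity is immediate from surjectivity of $\pi$. For injectivity, it suffices to show that $\pi^{-1}(C)$ is connected for each connected component $C \subseteq Y/G$. The two key ingredients are:
\begin{enumerate}
\item[(i)] Each $G$-orbit in $Y$ is connected, as a continuous image of the connected space $G$.
\item[(ii)] The quotient map $\pi$ is open, because for any open $U \subseteq Y$ the saturation $\pi^{-1}(\pi(U)) = \bigcup_{g \in G} gU$ is a union of open sets.
\end{enumerate}
Now suppose $\pi^{-1}(C) = A \sqcup B$ with $A, B$ open in $\pi^{-1}(C)$. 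By (i), each orbit lies entirely in $A$ or entirely in $B$, so both $A$ and $B$ are $G$-saturated. By (ii), $\pi(A)$ and $\pi(B)$ are disjoint open subsets of $C$ covering $C$, and connectedness of $C$ forces one of them to be empty. Hence $\pi^{-1}(C)$ is connected.

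Given this lemma, the first bijection $\pi_0(\R(X_\phi, G)) \cong \pi_0(\X(X_\phi, G))$ is exactly the instance with $Y = \R(X_\phi, G)$ and the conjugation action of $G$; this also matches the statement already recalled in the excerpt just after (\ref{eq:ident}). For the second bijection, the discussion immediately preceding the proposition notes that $\widetilde{\mathrm{Fix}}_\R(\phi) \subseteq \R(X, G)$ is $G$-invariant and that there is a natural identification $\widetilde{\mathrm{Fix}}_\R(\phi)/G \cong \mathrm{Fix}_\X(\phi)$. Applying the lemma with $Y = \widetilde{\mathrm{Fix}}_\R(\phi)$ and its restricted $G$-action yields $\pi_0(\widetilde{\mathrm{Fix}}_\R(\phi)) \cong \pi_0(\mathrm{Fix}_\X(\phi))$, completing the proof.

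I do not anticipate a real obstacle here: both statements are formal consequences of the lemma together with facts already recorded in the excerpt. The only point that requires a moment of care is the saturation step in the proof of the lemma, where one needs both connectedness of orbits and openness of the quotient map; once these are in hand the argument is purely topological and does not require any specific features of $\SU(2)$, the surface $\Sigma$, or the map $\phi$.
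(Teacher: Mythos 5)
Your proof is correct and matches the paper's implicit reasoning: the paper states Proposition \ref{prop:bij} without proof, taking for granted the earlier unproved observation (just after~(\ref{eq:ident})) that for connected $G$ the quotient by conjugation induces a bijection on $\pi_0$, together with the identification $\widetilde{\mathrm{Fix}}_\R(\phi)/G \cong \mathrm{Fix}_\X(\phi)$; your lemma is exactly the general fact being invoked. One small step you glide over: when you write $\pi^{-1}(C) = A \sqcup B$ with $A$ open \emph{in the subspace} $\pi^{-1}(C)$, openness of $\pi: Y \to Y/G$ does not directly give $\pi(A)$ open in $C$; you should note that $A = U \cap \pi^{-1}(C)$ for some open $U \subseteq Y$ and that, since $A$ is $G$-saturated, $\pi(A) = \pi(U) \cap C$, which is open in $C$.
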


\subsection{The commutator}

Much of our analysis will involve the fibers of the commutator map 
$$\mu: G \times G \longrightarrow G, \hspace{1cm} (A, B) \longmapsto [A, B].$$
It is a well-known that when $G$ is connected the fibers of $\mu$ are connected. We will address this explicitly when $G = \SU(2)$. 

The fiber $\mu^{-1}(1)$ is the set of commuting pairs in $\SU(2)$. If $A, B \in \SU(2)$ commute, then they belong to the same maximal torus. Maximal tori are connected, by definition, so $A$ and $B$ can be connected by paths through this maximal torus to the identity. It follows that $\mu^{-1}(1)$ is connected. The same argument proves the following. 

\begin{proposition}\label{prop:connectedcommutators}
For each positive integer $n$, the space
$$\left\{(A_1, \ldots, A_n) \in \SU(2)^n \; \vert \; [A_i, A_j] = 1, \; \forall i, j \right\}$$
of mutually commuting tuples is connected. 
\end{proposition}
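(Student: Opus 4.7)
The plan is to mimic the argument given in the text for the case $n = 2$: exhibit, for an arbitrary commuting tuple $(A_1,\ldots,A_n)$, a path in the space of mutually commuting tuples joining it to the trivial tuple $(1,\ldots,1)$. The key ingredient is the feature of $\SU(2)$ noted in the remark, namely that the centralizer of any non-central element is a maximal torus (hence abelian and connected).

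The key step is to find a single maximal torus $T \subseteq \SU(2)$ containing all of the $A_i$ simultaneously. I would split into two cases. If every $A_i$ lies in the center $\{\pm I\}$, then any maximal torus $T$ works, since every maximal torus of $\SU(2)$ contains $\pm I$. Otherwise, some $A_j$ is non-central; the relation $[A_j,A_i] = 1$ forces every $A_i$ to lie in the centralizer $Z_{\SU(2)}(A_j)$, and this centralizer is itself a maximal torus $T$. In either case all the $A_i$ lie in a common connected abelian subgroup $T$.

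Once $T$ is chosen, I would pick continuous paths $\gamma_i \colon [0,1] \to T$ with $\gamma_i(0) = A_i$ and $\gamma_i(1) = 1$; such paths exist because $T \cong S^1$ is path-connected. Since $T$ is abelian, the tuple $(\gamma_1(t),\ldots,\gamma_n(t))$ consists of mutually commuting elements for every $t$, so this defines a path inside the space of the proposition from $(A_1,\ldots,A_n)$ to $(1,\ldots,1)$. Any two commuting tuples can thus be joined via the identity tuple, so the space is path-connected and a fortiori connected.

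The main (and really only) obstacle is the minor case distinction above: if one were tempted to assert that any commuting tuple in a compact connected Lie group lies in a common maximal torus, this is false in general (it can fail already for $\SO(3)$), so the specific structure of $\SU(2)$ centralizers is what makes the argument go through. For groups like $\SU(n)$ or $\U(n)$ mentioned in the remark, the same proof applies with essentially no change, since the non-central centralizer hypothesis is exactly what is used.
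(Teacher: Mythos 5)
Your proof is correct and takes the same approach as the paper: place the whole commuting tuple inside a single maximal torus and contract along it to the identity, with your explicit case split (all central vs.\ some $A_j$ non-central, whose centralizer serves as the torus) supplying precisely the detail the paper glosses over when it writes that commuting elements ``belong to the same maximal torus'' and that ``the same argument'' handles general $n$. One minor caution on your closing remark: for $\SU(n)$ with $n \geq 3$ the centralizer of a non-central element is generally not abelian (e.g., $\mathrm{diag}(i,i,-1) \in \SU(3)$ has centralizer $S(\U(2)\times\U(1))$), so the centralizer argument does not transfer verbatim to those groups even though the proposition itself still holds there, by simultaneous diagonalization of commuting unitaries.
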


Away from the identity, we will use the following. 

\begin{proposition}\label{prop:commutator}
The restriction
$$\mu \vert : \mu^{-1}  (\SU(2)\backslash \left\{1 \right\} ) \longrightarrow \SU(2)\backslash \left\{1 \right\} $$
is a smooth fiber bundle with generic fiber diffeomorphic to $S^3$. In particular, all fibers of $\mu$ are connected.
\end{proposition}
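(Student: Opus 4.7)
The plan is to verify the proposition in three steps: first, show that $\mu$ is a submersion precisely on $\mu^{-1}(\SU(2)\setminus\{1\})$; then deduce the fiber bundle structure via Ehresmann's theorem after verifying properness; and finally identify a specific fiber to confirm connectedness. For the submersion step, I would differentiate $\mu$ at $(A,B)$ along curves $(Ae^{t\xi},Be^{t\eta})$ and right-translate by $\mu(A,B)^{-1}$ to land in $\mathfrak{su}(2)$, yielding the linear map
$$(\xi,\eta)\longmapsto \mathrm{Ad}_A(1-\mathrm{Ad}_B)\xi \;-\; \mathrm{Ad}_{ABA^{-1}}(1-\mathrm{Ad}_A)\eta.$$
Under $\mathfrak{su}(2)\cong \mathbb{R}^3$, $\mathrm{Ad}_B$ acts as rotation about the axis of $B$, so (when $B\notin\{\pm 1\}$) the image of $1-\mathrm{Ad}_B$ is the $2$-plane orthogonal to that axis, and similarly for $A$. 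Writing $\vec{a},\vec{b}\in\mathbb{R}^3$ for the axes of $A$ and $B$, the two image planes are orthogonal to $\mathrm{Ad}_A\vec{b}$ and $\mathrm{Ad}_{AB}\vec{a}$ respectively, so they span $\mathfrak{su}(2)$ precisely when these normals are linearly independent. A short check identifies this with $\vec{a},\vec{b}$ being non-parallel, equivalently $A,B$ not commuting, equivalently $[A,B]\neq 1$; the degenerate cases with $A$ or $B$ central force $[A,B]=1$ automatically.

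The restriction is also proper: any compact $K\subseteq \SU(2)\setminus\{1\}$ is closed in $\SU(2)$, so its preimage is closed in the compact space $\SU(2)\times \SU(2)$. Ehresmann's theorem then supplies the smooth fiber bundle structure. For the fiber, fix $C\neq 1$ and consider the projection $\mu^{-1}(C)\to \SU(2)$, $(A,B)\mapsto B$. Rewriting $[A,B]=C$ as $ABA^{-1}=CB$ forces $\mathrm{tr}(CB)=\mathrm{tr}(B)$, so the image lies in $\{B\in\SU(2):\mathrm{tr}(CB)=\mathrm{tr}(B)\}$---the intersection of $S^3$ with a real hyperplane, hence a $2$-sphere. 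Since $C\neq 1$ forbids $B\in\{\pm 1\}$, the fiber of this projection over each valid $B$ is a nonempty coset of the maximal torus centralizing $B$, hence a circle. Thus $\mu^{-1}(C)$ is an $S^1$-bundle over $S^2$, in particular connected; combined with \cref{prop:connectedcommutators}, which handles $\mu^{-1}(1)$, every fiber of $\mu$ is connected.

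The main obstacles are the linear-algebra verification for the submersion step---translating the formula for $d\mu$ into the cross-product picture on $\mathbb{R}^3$ and checking surjectivity---together with, for a complete identification of the generic fiber as $S^3$, computation of the Euler class of the $S^1$-bundle over $S^2$ just constructed. The latter I would address by examining a concrete fiber, e.g.\ $\mu^{-1}(-1)$, where the constraint $AB=-BA$ forces both $A$ and $B$ to be pure imaginary unit quaternions with mutually orthogonal axes.
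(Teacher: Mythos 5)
Your proposal is correct and follows essentially the same route as the paper's sketch: establish that non-identity values are regular via the differential of $\mu$, deduce the fiber bundle structure over the connected base $\SU(2)\setminus\{1\}$, and identify a concrete fiber (ultimately $\mu^{-1}(-1)$ via the trace/quaternion picture) to pin down the diffeomorphism type. Your explicit formula for $d\mu$ and the observation that $\mu^{-1}(C)\to S^2$, $(A,B)\mapsto B$, exhibits every non-central fiber as a circle bundle over a great $2$-sphere are welcome elaborations of details the paper leaves to the reader, but they do not change the underlying strategy.
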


\begin{proof}[Proof Sketch]
That $\mu$ is a fiber bundle away from the commuting pairs is an application of the implicit function theorem, together with the observation that all non-identity elements of $\SU(2)$ are regular values for $\mu$; the details are left to the reader. 

Since $\SU(2) \backslash \left\{1 \right\}$ is connected, it follows from the implicit function theorem of the previous paragraph that all fibers of $\mu$ in $\SU(2) \backslash \left\{1 \right\}$ are diffeomorphic. Our task therefore is to determine the manifold type of one such fiber. We will focus on $\mu^{-1}(-1)$. This fiber consists of the pairs $(A, B) \in \SU(2)^2$ with $ABA^{-1} B^{-1}  = -1$. A little juggling shows $A B A^{-1} = -B$ and so, taking traces, we find $\mathrm{tr}(B) = 0$. Likewise $\mathrm{tr}(A) = 0$ and $\mathrm{tr}(AB) = 0$. In fact, the converse holds: $[A,B] = -1$ if and only if $ \mathrm{tr}(A) = \mathrm{tr}(B)= \mathrm{tr}(AB) = 0$. Identifying $\SU(2) \cong S^3$ with the 3-sphere in the usual way (i.e., as the unit length quaternions), the set of trace-zero elements of $\SU(2)$ is identified with a 2-sphere $S^2 \subseteq S^3$. That is, $\mathrm{tr}(A) = \mathrm{tr}(B) = 0$ if and only if $A, B \in S^2$. Likewise, when $\mathrm{tr}(B) = 0$ we have $B^* = - B$ and so $\mathrm{tr}(AB) = 0$ is equivalent to $\mathrm{tr}(A B^*) = 0$; geometrically this means that $A$ and $B$ are orthogonal. Thus $\mu^{-1}(-1)$ consists of the set of orthogonal pairs $(A, B)$ of elements of $S^2$. This is the 3-sphere; for example, projection to the $A$-component is a principal $S^1$-bundle $\mu^{-1}(-1) \rightarrow S^2$ over $S^2$, which can be seen to be the Hopf fibration. 
\end{proof}

\begin{corollary}\label{cor:com}
Every fiber of the commutator map $\mu: \SU(2)^2 \rightarrow \SU(2)$ is connected and nonempty. 
\end{corollary}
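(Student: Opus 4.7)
The plan is to decompose $\SU(2)$ into the central element and its complement, and read off the corollary from the two preceding propositions on these two pieces.

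First I would handle the fiber over the identity. By definition $\mu^{-1}(1)$ is exactly the set of pairs $(A,B) \in \SU(2)^2$ with $[A,B]=1$, that is, the set of commuting pairs. This is the $n = 2$ case of Proposition \ref{prop:connectedcommutators}, which gives connectedness. It is nonempty since $(1,1) \in \mu^{-1}(1)$.

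Next I would handle the fibers over the noncentral elements $g \in \SU(2) \setminus \{1\}$. Proposition \ref{prop:commutator} says $\mu$ restricts to a smooth fiber bundle
$$\mu^{-1}(\SU(2) \setminus \{1\}) \longrightarrow \SU(2) \setminus \{1\}$$
whose fibers are all diffeomorphic to $S^3$ (the connectedness of the base $\SU(2) \setminus \{1\}$, used in the proof of that proposition, is what promotes the ``generic fiber'' statement to a statement about every fiber). In particular every such fiber is nonempty and connected.

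Since every $g \in \SU(2)$ falls into exactly one of these two cases, $\mu^{-1}(g)$ is nonempty and connected in either case, which is the content of the corollary. There is no genuine obstacle here; the work has already been carried out in Propositions \ref{prop:connectedcommutators} and \ref{prop:commutator}, and the only thing to verify is that the hypothesis of Proposition \ref{prop:commutator} indeed yields information about \emph{every} fiber over $\SU(2) \setminus \{1\}$ rather than just a ``generic'' one, which follows from the diffeomorphism-of-fibers argument internal to that proposition's proof.
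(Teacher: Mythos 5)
Your argument is correct and matches the paper's own reasoning: the fiber over $1$ is the set of commuting pairs, handled by Proposition \ref{prop:connectedcommutators} (with $n=2$), and every other fiber is covered by Proposition \ref{prop:commutator}. One small terminological slip: you describe the elements of $\SU(2)\setminus\{1\}$ as ``noncentral,'' but $-1 \in \SU(2)\setminus\{1\}$ is central; this is harmless since Proposition \ref{prop:commutator} applies to all of $\SU(2)\setminus\{1\}$ regardless of centrality, and indeed the paper's proof sketch of that proposition works directly with $\mu^{-1}(-1)$.
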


\section{Proofs of the main theorems}

Let $\Sigma$ be a closed, connected, oriented surface of genus 3 and write $\alpha_j, \beta_j$ for the loops based at $x_0$ as indicated in Figure \ref{fig:2}; we will not distinguish in the notation between loops based at $x_0$ and their homotopy classes in $\pi_1(\Sigma, x_0)$. Now suppose $\gamma$ is a simple closed curve in $\Sigma$ that \emph{does not contain $x_0$}. We will write $T_\gamma$ for the (left-handed) Dehn twist about $\gamma$, supported in a small neighborhood of $\gamma$ not containing $x_0$ (so a curve approaching $\gamma$ at a point takes a \emph{left} turn under $T_\gamma$). Let $\gamma_1, \gamma_2$ be as in Figure \ref{fig:1} and set $\Phi \defeq T_{\gamma_1} \circ T_{\gamma_2}^{-1}$, which is a homeomorphism of $\Sigma$ fixing $x_0$. 

Fix $n \in \bb{Z}$. In this section we will determine the number of connected components of the $\SU(2)$-representation variety $\R(\Sigma_{\Phi^n})$, character variety $\X(\Sigma_{\Phi^n})$, and fixed point sets $\mathrm{Fix}_\R((\Phi^n)^*)$ and $\mathrm{Fix}_\X((\Phi^n)^*)$. Our analysis relies on the following lemma, which tells us how $\Phi^n$ acts on $\pi_1(\Sigma, x_0)$.

\begin{lemma}\label{lem:phiaction}
If $n \geq 0$, the isomorphism $\Phi_*^n: \pi_1(\Sigma, x_0) \rightarrow \pi_1(\Sigma, x_0)$ acts on the generating set $\alpha_i, \beta_i$ from Figure \ref{fig:2} as follows:
$$\begin{array}{rclcrcl}
\alpha_{1} & \mapsto &  \chi^n \alpha_{1} \chi^{-n} & \hspace{1cm} &  \beta_{1} & \mapsto & \beta_{1} \alpha_{1}^{n} \chi^{-n}\\
         \alpha_{2} & \mapsto & \alpha_{2} &  &\beta_{2} & \mapsto & \beta_{2} \\
        \alpha_{3} & \mapsto & \chi^n \alpha_{3}  \chi^{-n} & &\beta_{3} & \mapsto & \chi^n  \beta_{3}  \chi^{-n}.
        \end{array}$$
        where $\chi \defeq [\alpha_3, \beta_3] \alpha_1$. 
\end{lemma}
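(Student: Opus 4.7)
The plan is to prove this by induction on $n \geq 0$, with the heart of the argument concentrated in the base case $n=1$ and a clean inductive step that rests on a single fixed-point identity $\Phi_*(\chi) = \chi$. The base case $n=0$ is trivial since both sides reduce to the identity map.

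For the $n=1$ case I would compute $\Phi_* = (T_{\gamma_1})_* \circ (T_{\gamma_2}^{-1})_*$ directly on each of the six generators, using the standard description of a Dehn twist's action on $\pi_1(\Sigma, x_0)$: the twist $T_\gamma$ fixes any loop disjoint from $\gamma$, while on a loop transverse to $\gamma$ each crossing contributes an inserted copy of the based class of $\gamma$, signed by the intersection sign and the (left-handed) twist convention declared before the lemma. Reading Figure~\ref{fig:1}, the curves $\alpha_2, \beta_2$ sit on the $x_0$-side of $\gamma_2$ and are disjoint from $\gamma_1 \cup \gamma_2$, so both twists fix them; the loops $\alpha_1, \alpha_3, \beta_3$ each cross the bounding annulus in a pattern that, once basing arcs are chosen, produces pure conjugation by $\chi = [\alpha_3,\beta_3]\alpha_1$ under $T_{\gamma_2}^{-1}$; and $\beta_1$ additionally crosses $\gamma_1$ once inside the top handle, which contributes the extra $\alpha_1$ factor via $T_{\gamma_1}$. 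This bookkeeping is routine but is the one place where geometric care is essential.

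The inductive step is then purely algebraic. First I verify $\Phi_*(\chi) = \chi$ using the $n=1$ formulas:
\begin{equation*}
\Phi_*(\chi) = [\chi\alpha_3\chi^{-1}, \chi\beta_3\chi^{-1}] \cdot \chi\alpha_1\chi^{-1} = \chi[\alpha_3,\beta_3]\chi^{-1} \cdot \chi\alpha_1\chi^{-1} = \chi \cdot \chi \cdot \chi^{-1} = \chi.
\end{equation*}
Then I apply $\Phi_*$ to each formula for $\Phi_*^n$ and simplify, using $\Phi_*(\chi)^k = \chi^k$ and telescoping cancellations $\chi^{-1}\chi = 1$. The only nontrivial simplification is
\begin{equation*}
\Phi_*^{n+1}(\beta_1) = \Phi_*(\beta_1 \alpha_1^n \chi^{-n}) = (\beta_1 \alpha_1 \chi^{-1})(\chi\alpha_1\chi^{-1})^n \chi^{-n} = \beta_1 \alpha_1^{n+1} \chi^{-(n+1)};
\end{equation*}
the conjugation cases for $\alpha_1, \alpha_3, \beta_3$ collapse immediately, and $\alpha_2, \beta_2$ remain fixed.

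I expect the main obstacle to be the $n = 1$ base case. Setting up a consistent system of basing arcs from $x_0$ to each of $\gamma_1$ and $\gamma_2$, computing the signed intersection of each generator with each twisting curve, and correctly matching signs against the left-handed convention are all routine but error-prone; identifying the based class of $\gamma_2$ as precisely $[\alpha_3,\beta_3]\alpha_1$ (rather than some conjugate or inverse) is the crucial bit of geometric input on which the whole lemma hinges. Once the $n=1$ formulas are in hand, the inductive step is purely formal.
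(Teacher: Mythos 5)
Your proposal is correct and follows essentially the same route as the paper: decompose $\Phi_* = (T_{\gamma_1})_* \circ (T_{\gamma_2}^{-1})_*$, work out each Dehn twist's action on the generators for the base case $n = 1$, and then induct using the observation that $\chi$ is $\Phi_*$-invariant. The paper writes out the intermediate formulas for $(T_{\gamma_1})_*$ and $(T_{\gamma_2})_*$ explicitly where you describe them in prose, but the structure of the argument, including the verification $\Phi_*(\chi) = \chi$ and the telescoping computation for $\beta_1$, matches what the authors do.
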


Note the asymmetry in the action on $\alpha_1$ versus the action on $\beta_1$; this is a consequence of the asymmetry of our choice of basepoint $x_0$ in Figure \ref{fig:1}. 

\begin{proof}[Proof of Lemma \ref{lem:phiaction}]
Since $\gamma_1$ does not intersect $\alpha_1, \alpha_2, \alpha_3, \beta_2,$ or $\beta_3$, it follows that $T_{\gamma_1}$ acts as the identity on each of these:
$$(T_{\gamma_1})_* \alpha_i = \alpha_i \; \; 1 \leq i \leq 3, \hspace{1cm} (T_{\gamma_1})_* \beta_j = \beta_j \; \; 2 \leq j \leq 3.$$
On the other hand, $\beta_1$ intersects $\gamma_1$ exactly once, and a close analysis of this intersection reveals\footnote{Our concatenation convention for elements of $\pi_1$ is the ``left-to-right convention'': $\alpha \beta$ means that $\alpha$ is traced first, then $\beta$.} $(T_{\gamma_1})_* \beta_1 = \beta_1 \alpha_1$. Likewise, we find that $T_{\gamma_2}$ acts on our generating set as follows:
$$ \begin{array}{rclcrcl}
\alpha_{1} & \mapsto & \chi^{-1} \alpha_{1} \chi &\hspace{.5cm} &  \beta_{1} & \mapsto & \beta_{1} \chi\\
\alpha_{2} & \mapsto & \alpha_{2} && \beta_{2} & \mapsto & \beta_{2}\\ 
\alpha_{3} & \mapsto & \chi^{-1} \alpha_{3}\chi  & & \beta_{3} & \mapsto &\chi^{-1}\beta_{3} \chi.
\end{array}$$
Thus its inverse $T_{\gamma_2}^{-1}$ acts as
$$\begin{array}{rclcrcl}
 \alpha_{1} & \mapsto & \chi \alpha_{1} \chi^{-1} &\hspace{.5cm} & \beta_{1} & \mapsto & \beta_{1} \chi^{-1}\\
   \alpha_{2} & \mapsto & \alpha_{2} && \beta_{2} &\mapsto & \beta_{2}\\
    \alpha_{3} & \mapsto & \chi \alpha_{3}\chi^{-1} && \beta_{3} & \mapsto & \chi \beta_{3}\chi^{-1} .
\end{array}$$
The lemma for $n = 1$ follows from these formulas by composing $ \Phi = T_{\gamma_1} \circ T_{\gamma_2}^{-1}$, while the case for $n = 0$ is trivial. The case for $n \geq 2$ follows from induction (which can be facilitated by the observation that $\chi  = [\alpha_3, \beta_3] \alpha_1$ is fixed under $\Phi$). 
\end{proof}

\subsection{Proof of \cref{thm:C}}\label{sec:ProofOfTheoremC}

Write $\rho = (A_i, B_i)_{i = 1}^3  \in \R(\Sigma) \subseteq \SU(2)^6$ for a generic element of the representation variety, so $\prod_i [ A_i, B_i] = 1$. Our aim is to show that the set 
$$\mathrm{Fix}_\R((\Phi^n)^*) = \left\{ \rho \in \R(\Sigma)\; \vert \; (\Phi^n)^* \rho = \rho \right\}$$
has $\lfloor n^2/ 2 \rfloor + 1$ connected components. Due to the identity
$$(\Phi^n)^{*}\rho =  \rho  \; \Longleftrightarrow \; (\Phi^{-n})^{*} \rho =  \rho $$
we may assume $n \geq 0$. To simplify the notation, we set $\phi \defeq \Phi^n$.

By Lemma \ref{lem:phiaction}, a representation $\rho = (A_i, B_i)_i$ is in $ \mathrm{Fix}_\R(\phi^*)$ if and only if the following hold
$$\prod_i [A_i, B_i] = 1, \hspace{1cm} \begin{array}{rclcrcl}
A_{1}  & = &  X^n A_1 X^{-n}, &  &B_{1}  & = & B_{1} A_{1}^{n} X^{-n},\\
          A_{2}  & = & A_{2}, & & B_{2}  & = & B_{2}, \\
         A_{3}  & = & X^n A_{3}X^{-n}, & & B_{3}  & = & X^n B_{3}  X^{-n},
        \end{array}$$
        where we have set $X \defeq [A_{3}, B_{3}]A_{1}$. Observe that $B_1, A_2,$ and $ B_2$ are only constrained by the product of commutators relation. To highlight this observation, consider the set
$$\D \defeq \left\{(A_1, A_3, B_3) \in \SU(2)^3 \; \Big| \; A_1^n = ([A_3, B_3]A_1)^n, \; [A_3, A_1^n] = [B_3, A_1^n] = 1 \right\}$$
and the projection
$$p_\D: \mathrm{Fix}_\R(\phi^{*}) \longrightarrow \D, \hspace{1cm} (A_i, B_i)_{i = 1}^3 \longmapsto (A_1, A_3, B_3),$$
which is well-defined by the defining equations for the fixed point set. 

\begin{lemma}\label{lem:D}
The projection induces a bijection
$$\pi_0(\mathrm{Fix}_\R(\phi^{*})) \cong \pi_0(\D).$$
\end{lemma}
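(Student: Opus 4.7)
The plan is to show that $p_\D$ is a surjective closed continuous map with connected fibers, and then to conclude the bijection on $\pi_0$ from these three properties.

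First I would identify the fiber $F \defeq p_\D^{-1}(A_1, A_3, B_3)$ explicitly. Using Lemma \ref{lem:phiaction}, each fixed-point equation beyond the product-of-commutators relation is automatically satisfied when $(A_1, A_3, B_3) \in \D$: the identities $A_1 = X^n A_1 X^{-n}$, $A_3 = X^n A_3 X^{-n}$, and $B_3 = X^n B_3 X^{-n}$ all follow from $A_1^n = X^n$ together with $[A_3, A_1^n] = [B_3, A_1^n] = 1$, while $B_1 = B_1 A_1^n X^{-n}$ reduces to $A_1^n = X^n$. Hence the only constraint on $(B_1, A_2, B_2)$ is
$$[A_1, B_1][A_2, B_2] = [A_3, B_3]^{-1}.$$
Surjectivity of $p_\D$ then follows immediately: set $B_1 = 1$ and apply Corollary \ref{cor:com} to produce $(A_2, B_2)$ with $[A_2, B_2] = [A_3, B_3]^{-1}$.

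Next I would show that each such fiber $F$ is connected. Projecting via $q : F \to \SU(2)$, $(B_1, A_2, B_2) \mapsto B_1$, each preimage $q^{-1}(B_1)$ is homeomorphic to the commutator fiber $\mu^{-1}([A_1, B_1]^{-1}[A_3, B_3]^{-1})$, which is nonempty and connected by Corollary \ref{cor:com}. Because $F$ is closed in the compact space $\SU(2)^3$, the map $q$ is closed onto the Hausdorff space $\SU(2)$. If $F = U \sqcup V$ were a nontrivial clopen decomposition, then the connected fibers of $q$ would force $\SU(2) = q(U) \sqcup q(V)$; both pieces would be closed (by $q$ being closed) and hence clopen, contradicting the connectedness of $\SU(2)$.

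Finally I would promote these observations to the desired bijection. The space $\mathrm{Fix}_\R(\phi^*)$ is compact and $\D$ is Hausdorff, so $p_\D$ itself is a closed map; since $\D$ is a real algebraic variety it is locally path-connected, so each component $\D_i$ is clopen. Repeating the disconnection argument from the previous paragraph with $p_\D$ and $\D_i$ in place of $q$ and $\SU(2)$ shows that $p_\D^{-1}(\D_i)$ is connected, and since surjectivity ensures every component of $\D$ is hit, the induced map $\pi_0(\mathrm{Fix}_\R(\phi^*)) \to \pi_0(\D)$ is a bijection. I expect the only nontrivial point to be the bookkeeping in the first step, verifying that the fixed-point equations reduce exactly to the single commutator relation on the fiber; the rest is a clean compactness-and-closed-map argument resting on Corollary \ref{cor:com}.
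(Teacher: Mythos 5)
Your proof is correct and follows essentially the same route as the paper's: identify the fiber of $p_\D$ over $(A_1,A_3,B_3)$ as the set of $(B_1,A_2,B_2)$ constrained only by the product-of-commutators relation, project onto the $B_1$-coordinate, and invoke Corollary~\ref{cor:com} for connectedness and nonemptiness. The one place you go further than the paper is in justifying the inference ``surjective with nonempty connected fibers $\Rightarrow$ bijection on $\pi_0$'' via the closed-map/clopen-partition argument; the paper takes this as understood (it records the needed local path-connectedness in its opening remark but does not spell out the closed-map step). Your bookkeeping verifying that the fixed-point equations on $(A_1,A_3,B_3)$ are exactly the defining equations of $\D$, and that the remaining equations impose no extra constraint on $(B_1,A_2,B_2)$, is accurate and matches the reduction the paper performs implicitly when it writes down $\C_{(A_1,A_3,B_3)}$.
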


\begin{proof}
To prove the lemma, it suffices to show that each fiber of the map $p_\D$ is nonempty and connected. The fiber over $(A_1, A_3, B_3)$ of $p_\D$ is homeomorphic to the set 
$$\C_{(A_1, A_3, B_3)} \defeq \left\{(B_1, A_2, B_2) \in \SU(2)^3 \; \Big| \; [A_2, B_2] = [A_1, B_1]^{-1} [A_3, B_3]^{-1}\right\}.$$
Consider the projection 
$$\C_{(A_1, A_3, B_3)} \longrightarrow \SU(2), \hspace{1cm} (B_1, A_2, B_2) \longrightarrow B_1.$$
The fiber over $B_1$ is homeomorphic to the preimage $\mu^{-1}([A_1, B_1]^{-1} [A_3, B_3]^{-1})$, where $\mu$ is the commutator map. By Corollary \ref{cor:com}, each of these fibers is nonempty and connected. It follows that $\C_{(A_1, A_3, B_3)} \cong p_\D^{-1}(A_1, A_3, B_3)$ is nonempty and connected. 
\end{proof}

To prove Theorem \ref{thm:C}, we will show that $\D$ has $\lfloor n^2/2 \rfloor + 1$ connected components. For $\pm \in \left\{+, -\right\}$, define
$$\begin{array}{rcl}
\D_\pm & \defeq & \left\{ (A_1, A_3, B_3) \in \D \; \vert \; A_1^n = \pm 1 \right\}\\
\D_{0} & \defeq & \left\{ (A_1, A_3, B_3) \in \D \; \vert \; A_1^n \neq 1, \; A_1^n \neq -1 \right\}
\end{array}$$
so $\D = \D_+ \cup \D_{0} \cup \D_-$, and this is a disjoint union. Note that $\D_\pm$ is closed, but $\D_{0}$ is not: its closure intersects $\D_+ \cup \D_-$. 

\begin{lemma}\label{lem:componentsR}
\begin{enumerate}
\item[(a)] The closure $\overline{\D}_{0}$ of $\D_{0}$ in $\D$ is connected.
\item[(b)] The subspace $\D_+$ has $(\lfloor n/2  \rfloor + 1)^2$ connected components and exactly $\lfloor n/2  \rfloor + 1$ of these intersect the closure $\overline{\D}_{0}$.
\item[(c)] The subspace $\D_-$ has $(\lfloor (n-1)/2  \rfloor + 1)^2$ connected components and exactly $\lfloor (n-1)/2  \rfloor + 1$ of these intersect the closure $\overline{\D}_{0}$.
\end{enumerate}
\end{lemma}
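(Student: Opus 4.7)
My plan is to exploit the fact that $A_1^n$ is central on $\D_\pm$ but not on $\D_0$, which simplifies the defining equations on each piece. On $\D_\pm$, since $A_1^n = \pm 1$ is central in $\SU(2)$, the conditions $[A_3, A_1^n] = [B_3, A_1^n] = 1$ are automatic, so $\D_\pm = \{(A_1, A_3, B_3) \in \SU(2)^3 \mid A_1^n = \pm 1, \; ([A_3, B_3] A_1)^n = \pm 1\}$. On $\D_0$, the centralizer of the non-central element $A_1^n$ is the unique maximal torus $T_{A_1}$ containing $A_1$, so the commutativity constraints force $A_3, B_3 \in T_{A_1}$; in particular $[A_3, B_3] = 1$, which makes the first defining equation automatic, and $\D_0$ fibers via $(A_1, A_3, B_3) \mapsto A_1$ over $U := \{A_1 \in \SU(2) \mid A_1^n \neq \pm 1\}$ with fiber $T_{A_1} \times T_{A_1}$ at $A_1$.

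\textbf{For (a),} I would use the embedding $\iota \colon \SU(2) \to \D$ defined by $\iota(A_1) = (A_1, 1, 1)$, which is well-defined since all three defining equations of $\D$ collapse to trivialities. The path $A_1(t) = \exp(i t \sigma_3)$ for $t \in [0, \pi]$ sweeps through every conjugacy class of $\SU(2)$ exactly once. Its $\iota$-image lies in $\D_0$ except at the $n + 1$ critical times $t = k\pi/n$, where it enters $\D_\pm$ as a limit of interior points from either side; hence the entire path lies in $\overline{\D}_0$. Since $U$ has $n$ connected components (the open slabs cut out by the $n-1$ interior critical $2$-spheres) and the path meets every slab, it meets every component of $\D_0$, proving $\overline{\D}_0$ is path-connected.

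\textbf{For (b) and (c),} I would enumerate the components of $S_\pm := \{A \in \SU(2) \mid A^n = \pm 1\}$ by rotation angle: $A \in S_\pm$ iff its angle $\theta \in [0, \pi]$ satisfies $n\theta \equiv 0 \pmod{2\pi}$ (resp.\ $n\theta \equiv \pi \pmod{2\pi}$). A direct count gives $\lfloor n/2 \rfloor + 1$ components for $S_+$ and $\lfloor (n-1)/2 \rfloor + 1$ for $S_-$ (central elements contribute isolated points, and the other values of $\theta$ yield $2$-sphere conjugacy classes). Next I would study the continuous surjection $p \colon \D_\pm \to S_\pm \times S_\pm$ defined by $p(A_1, A_3, B_3) = (A_1, [A_3, B_3] A_1)$; surjectivity follows from surjectivity of the commutator map $\mu$, and the fiber of $p$ over $(A_1, X)$ is homeomorphic to $\mu^{-1}(X A_1^{-1})$, which is nonempty and connected by Corollary \ref{cor:com}. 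Since $\D_\pm \subset \SU(2)^3$ is compact, $p$ is closed, and the general principle that a closed continuous surjection with connected fibers onto a locally connected space induces a bijection on $\pi_0$ gives $\pi_0(\D_\pm) \cong \pi_0(S_\pm \times S_\pm)$; the counts $(\lfloor n/2 \rfloor + 1)^2$ and $(\lfloor (n-1)/2 \rfloor + 1)^2$ follow.

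For the intersection with $\overline{\D}_0$, any point in $\overline{\D}_0 \cap \D_\pm$ is a limit of triples with $A_3, B_3$ in a common maximal torus, hence satisfies $[A_3, B_3] = 1$ and so $X = A_1$; this places $p(\overline{\D}_0 \cap \D_\pm)$ in the diagonal $\Delta \subset S_\pm \times S_\pm$, which has $\lfloor n/2 \rfloor + 1$ (resp.\ $\lfloor (n-1)/2 \rfloor + 1$) components. Conversely, for each diagonal component $C \times C$ and each $A_1 \in C$, the point $(A_1, 1, 1)$ lies in the corresponding component of $\D_\pm$ and is approximated by $(A_1', 1, 1) \in \D_0$ with $A_1'$ in a maximal torus through $A_1$ perturbed slightly off the critical locus. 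The main obstacle is the fibration step in parts (b) and (c): one must invoke the closed-surjection-with-connected-fibers principle carefully, using compactness of $\D_\pm$ and local connectedness of $S_\pm \times S_\pm \subset \SU(2)^2$, so that components of $\D_\pm$ biject with those of the base.
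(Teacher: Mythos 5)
Your proof is correct, and in parts (b) and (c) it takes a genuinely cleaner route than the paper's. The paper maps $\D_\pm$ to the \emph{discrete} set $\{(k,\ell):0\le k,\ell\le\lfloor n/2\rfloor\}$ of trace data, so a fiber of their map $p_+$ is a union of commutator preimages over an entire conjugacy sphere of $A_1$'s, and the paper must then run a three-way case analysis ($[A_3,B_3]$ both nontrivial / both trivial / mixed) to contract within such a fiber. You instead map onto the actual level set $S_\pm\times S_\pm\subset\SU(2)^2$, so each fiber is literally a single $\mu^{-1}(XA_1^{-1})$, connected directly by Corollary~\ref{cor:com}; the price you pay is invoking the closed-surjection-with-connected-fibers principle, which does hold here (since $\D_\pm$ is compact, hence $p$ is closed, and $S_\pm\times S_\pm$ is locally connected so its components are open: if $p^{-1}(D)=U_1\sqcup U_2$ clopen and nonempty, then $D\setminus p(U_2)$ and $D\setminus p(U_1)$ would disconnect $D$). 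Your identification of the intersecting components via the diagonal of $S_\pm\times S_\pm$ exactly matches the paper's.

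For part (a), your argument is more compressed than the paper's and leaves one step implicit: from ``the path meets every slab of $U$'' you conclude ``the path meets every component of $\D_0$,'' which silently uses that each component of $\D_0$ contains a point of the form $(A_1,1,1)$. To justify this, note that any $(A_1,A_3,B_3)\in\D_0$ has $A_3,B_3$ in the maximal torus $T_{A_1}$, so contracting $(A_3,B_3)$ to $(1,1)$ inside $T_{A_1}\times T_{A_1}$ gives a path in $\D_0$ to $(A_1,1,1)$; combined with your explicit path $\iota(A_1(t))$ this shows every point of $\D_0$ (and then, taking closures, every point of $\overline{\D}_0$) is path-connected to the $\iota$-image. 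This is essentially the paper's two-step contraction, just reorganized; your slab picture is a useful visualization but does not by itself supply the fiber-contraction step.
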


We will prove the lemma in a moment, but first we show how it finishes the proof of Theorem \ref{thm:C}: The components of $\D_\pm$ not intersecting $\overline{\D}_{0}$ determine distinct components of $\D$---there are $(\lfloor n/2  \rfloor + 1)\lfloor n/2  \rfloor$ such components in the $\pm = +$ case and $(\lfloor (n-1)/2  \rfloor + 1)\lfloor (n-1)/2  \rfloor$ in the $\pm = -$ case. Since $\overline{\D}_{0}$ produces exactly one component in $\D$, it follows that the number of components in $\D$ is
$$(\lfloor n/2  \rfloor + 1)\lfloor n/2 \rfloor + (\lfloor (n-1)/2  \rfloor + 1)\lfloor (n-1)/2  \rfloor + 1 = \lfloor n^2/2 \rfloor + 1,$$
as desired. 

\begin{proof}[Proof of Lemma \ref{lem:componentsR}]
For (a), note that if $(A_1, A_3, B_3) \in \D_{0}$, then $A_3, B_3$ both lie in the centralizer of the \emph{non-central} element $A_1^n$. Such a centralizer in $\SU(2)$ is abelian (its the unique maximal torus containing $A_1^n$), so $A_3$ and $B_3$ commute. It follows that the closure $\overline{\D}_{0}$ consists of those tuples $(A_1, A_3, B_3)$ with the property that $A_1^n, A_3, B_3$ all commute. It is not hard to see that this is connected: Fixing $A_1$ at first, find a path $(A_1, A_3(t), B_3(t))$ from $(A_1, A_3, B_3)$ to $(A_1, 1, 1)$ with $A_3(t), B_3(t)$ in the centralizer of $A_1^n$; this guarantees that $(A_1, A_3(t), B_3(t))$ lies in $\overline{\D}_{0}$ for all $t$. Then if $A_1(t)$ is any path from $A_1$ to $1$, the tuple $(A_1(t), 1, 1)$ is a path in $\overline{\D}_{0}$. Concatenating gives a path in $\overline{\D}_{0}$ from $(A_1, A_3, B_3)$ to $(1, 1, 1)$. 

Next we prove (b); the proof of (c) is similar. Note that if $(A_1, A_3, B_3) \in \D_+$, then 
$$\mathrm{tr}(A_1) = 2\cos(2 \pi k /n), \hspace{1cm} \mathrm{tr}([A_3, B_3] A_1) = 2\cos(2 \pi \ell / n)$$ 
for unique integers $k, \ell$ with $0 \leq k, \ell \leq n/2$. It follows that the assignment
$$(A_1, A_3, B_3) \longmapsto \frac{n}{2 \pi} \Big(\cos^{-1}\big(\mathrm{tr}(A_1)/2\big), \;\cos^{-1}\big(\mathrm{tr}([A_3, B_3] A_1)/2\big)\Big)$$
determines a well-defined continuous map of the form
$$p_+: \D_+ \longrightarrow \left\{(k , \ell)  \in \bb{Z}^2 \; \Big| \; 0 \leq k, \ell \leq \lfloor n/ 2 \rfloor \right\}.$$
We will show that this map induces a bijection on $\pi_0$; this gives the first claim in (b). To show this, it suffices to prove that each fiber of $p_+$ is nonempty and connected. That each fiber is nonempty can be seen as follows: Fix $k, \ell$ in the codomain and let $A_1$ and $X$ be elements of $\SU(2)$ with
$$\mathrm{tr}(A_1) = 2\cos(2 \pi k /n), \hspace{1cm} \mathrm{tr}(X) = 2\cos(2 \pi \ell /n).$$
By Corollary \ref{cor:com}, there are $A_3, B_3 \in \SU(2)$ with $[A_3, B_3] = X A_1^{-1}$, so 
$$p_+(A_1, A_3, B_3) = (k, \ell).$$ 
As for connectivity, suppose $p_+(A_1, A_3, B_3) = p_+(A_1', A_3', B_3')$. Then $\mathrm{tr}(A_1) = \mathrm{tr}(A_1')$ and so $A_1, A_1'$ are conjugate; this implies there is a path $A_1(t)$ from $A_1$ to $A_1'$ with $\mathrm{tr}(A_1(t))$ constant in $t$. Assume for now that $[A_3, B_3] \neq 1$ and $[A_3', B_3'] \neq 1$. Then we can find a path $Y(t)$ from $[A_3, B_3]$ to $[A_3', B_3']$ with $Y(t) \neq 1$ for all $t$ and $\mathrm{tr}(Y(t)A_1(t))$ constant in $t$. By Proposition \ref{prop:commutator}, we can find paths $A_3(t)$ and $B_3(t)$ from $A_3$ and $B_3$ to $A_3'$ and $B_3'$, respectively, with $[A_3(t), B_3(t)] = Y(t)$. Then $(A_1(t), A_3(t), B_3(t))$ is a path in $\D_+$ from $(A_1, A_3, B_3)$ to $(A_1', A_3', B_3')$. 

Next, consider the case where $[A_3, B_3] = [A_3', B_3'] = 1$. By Proposition \ref{prop:connectedcommutators}, the space of commuting pairs is connected, so there is a path $(A_3(t), B_3(t))$ from $(A_3, B_3)$ to $(A_3', B_3')$ with $[A_3(t), B_3(t)] = 1$ for all $t$. Then $(A_1(t), A_3(t), B_3(t))$ is the desired path in this case. 

Now consider the case where exactly one of $[A_3, B_3]$ and $[A_3', B_3']$ equals 1; by relabeling, we may assume $[A_3, B_3] \neq 1$ and $[A_3', B_3'] = 1$. Then we can find a path $Y(t)$, parametrized by $t \in [0, 1]$, with $Y(0) = [A_3, B_3]$, $Y(1) = 1$, and $Y(t) \neq 1$ for $t \in [0, 1)$. Similar to the first case, for $t \in [0, 1)$ we can find $A_3(t)$ and $B_3(t)$ with $[A_3(t), B_3(t)] = Y(t)$. The path $(A_3(t), B_3(t))$ can be chosen to be continuous in $t$ and to converge to a limiting value $(A_3(1), B_3(1))$ as $t \rightarrow 1^-$. This reduces the present case to the previous one where $[A_3, B_3] = [A_3', B_3'] =1$, and thus finishes the proof of the first claim in (b). 

To prove the second claim in (b), recall that the closure of $D_{0}$ consists of those tuples $(A_1, A_3, B_3)$ with $A_1^n, A_3, B_3$ all commuting. The intersection $\D_+ \cap \overline{D}_{0}$ thus consists of those with $A_1^n = 1$ and $[A_3 , B_3] = 1$; this corresponds to those points in $\D_+$ that map to the diagonal under $p_+$, of which there are $\lfloor n/2 \rfloor + 1$ such components. 
\end{proof}

\begin{remark}\label{rem:AbelianRep}
Suppose $\rho = (A_i, B_i)_i \in \R(\Sigma)$ is abelian, so all $A_i, B_i$ commute. This is automatically a fixed point of $\phi^*$ and it has $X = A_1$. This latter identity, via the analysis above involving $p_+$ (and its $\pm = -$ counterpart), implies that $\rho$ belongs to the connected component of $\mathrm{Fix}_\R(\phi^*)$ containing $\D_{0}$ (i.e., the connected component of the trivial representation). From this we deduce that if $\rho \in \mathrm{Fix}_\R(\phi^*)$ is not in the component containing the trivial representation, then $\rho$ is not an abelian representation. 
\end{remark}


\subsection{Proof of \cref{thm:A}}\label{sec:ProofOfTheoremA}

As above, we set $\phi \defeq \Phi^n$. Our task is to prove that the representation variety $\R(\Sigma_{\phi})$ of the 3-manifold $\Sigma_{\phi}$ has $n^2 + 1$ (resp. $n^2$) connected components when $n$ is even (resp. odd). By Lemma \ref{lem:phiaction}, we can identify $\R(\Sigma_{\phi})$ with the set of $(T, (A_i, B_i)_{i = 1}^3 ) \in \SU(2)^7$ satisfying 
\begin{subequations}
\label{eq:fixed}
 \begin{align}
 \prod_{i = 1}^3 \left[A_i, B_i\right] = & \; 1 \label{eq:fixed0}\\ 
 \left[A_1, X^n \right] = & \;1   \label{eq:fixed1}\\
 \left[B_1^{-1} , T^{-1}\right]  = & \;A_1^n X^{-n}    \label{eq:fixed2}\\
\left[A_2, T \right]  =  \left[B_2, T \right]  = &  \;1 \label{eq:fixed3}\\
\left[A_3, TX^n \right]  =  \left[B_3, TX^n \right]  =  & \;1 \label{eq:fixed4}
         \end{align}
         \end{subequations}
where we have set $X \defeq [A_3, B_3] A_1 $. Restricting to those tuples with $T = \pm 1$, we recover a subset homeomorphic to the fixed point set $\mathrm{Fix}_\R(\phi^{*})$ considered in the previous section. We thus need to understand the subset where $T$ is not central in $\SU(2)$. We split this up into two pieces: For $\pm \in \left\{+, - \right\}$ write $\F_{\pm}$ for the set of $(T, (A_i, B_i)) \in \R(\Sigma_{\phi})$ with $TX^n = \pm 1$ but $T \notin Z(\SU(2))$, and also write $\F_{0}$ for the set of $(T, (A_i, B_i)) \in \R(\Sigma_{\phi})$ with $TX^n, T \notin Z(\SU(2))$. We thus can write our representation variety as a disjoint union
$$\R(\Sigma_{\phi}) = \Big(\left\{1 \right\} \times \mathrm{Fix}_\R(\phi^{*})\Big) \cup \Big(\left\{-1 \right\} \times \mathrm{Fix}_\R(\phi^{*})\Big)  \cup \F_+ \cup \F_0 \cup \F_{-}.$$
We note that $\left\{ \pm 1\right\} \times \mathrm{Fix}_\R(\phi^{*})$ is closed in $\R(\Sigma_{\phi})$, but $\F_\pm$ and $\F_{0}$ are not. We begin by analyzing the closure $\overline{\F}_\pm$ of $\F_\pm$, which non-trivially intersects $\left\{ 1, -1 \right\} \times \mathrm{Fix}_\R(\phi^{*})$ (this closure does not intersect $\F_{0}$). 

\begin{lemma}\label{lem:pm}
The closure $\overline{\F}_{\pm}$ is connected and contains the representations $ (1, (1, 1)_{i = 1}^3)$ and $ (-1, (1, 1)_{i = 1}^3)$. For $\epsilon \in \left\{1, -1 \right\}$, the intersection $\overline{\F}_\pm \cap \left\{\epsilon \right\} \times \mathrm{Fix}_\R(\phi^{*})$ lies in the component of $\left\{ \epsilon \right\} \times \mathrm{Fix}_\R(\phi^{*})$ containing $(\epsilon, (1, 1)_{i = 1}^3)$. 
\end{lemma}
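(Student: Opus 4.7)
The plan is to first collapse the defining equations (\ref{eq:fixed0})--(\ref{eq:fixed4}) of $\F_\pm$ using the non-centrality of $T$. Writing $T = \pm X^{-n}$, this forces $X^n$ to be non-central, so its centralizer in $\SU(2)$ is an abelian maximal torus $\mathbb{T}$. Equations (\ref{eq:fixed1}) and (\ref{eq:fixed3}) then place $A_1, A_2, B_2 \in \mathbb{T}$, in particular making $[A_2, B_2] = 1$. Substituting into (\ref{eq:fixed0}) and using $X = [A_3, B_3] A_1$ yields $B_1 A_1 B_1^{-1} = X$, from which (\ref{eq:fixed2}) is automatic upon raising to the $n$th power. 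Since $A_1$ and $X$ are conjugate in $\SU(2)$ and both lie in the one-dimensional torus $\mathbb{T}$, we conclude $A_1 \in \{X, X^{-1}\}$. This splits $\F_\pm$ into two disjoint strata: case (A), where $A_1 = X$, $[A_3, B_3] = 1$, and $B_1 \in \mathbb{T}$; and case (B), where $A_1 = X^{-1}$, $[A_3, B_3] = X^2$, and $B_1 \in \mathbb{T} w$ for a Weyl element $w$.

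Each stratum fibers over the base $B \defeq \{X \in \SU(2) : X^n \notin \{\pm 1\}\}$, with fiber a product of connected spaces---the space of commuting pairs in $\SU(2)^2$ (case A) or the commutator preimage $\mu^{-1}(X^2)$ (case B), paired with $\mathbb{T}$- or $\mathbb{T} w$-valued coordinates---all connected by \cref{prop:connectedcommutators} and \cref{prop:commutator}. The base $B$ decomposes into $n$ path components indexed by the trace intervals $\bigl(2\cos((k+1)\pi/n),\, 2\cos(k\pi/n)\bigr)$ for $k = 0, \ldots, n-1$, so $\F_\pm$ itself has $2n$ path components. Passing to $\overline{\F_\pm}$ is where these glue: as $\mathrm{tr}(X)$ approaches an endpoint $2\cos(k\pi/n)$, we have $X^n \to (-1)^k$ and $T$ becomes central, placing the limit in $\{(-1)^k\}\times \mathrm{Fix}_\R(\phi^*)$. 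The constraint $A_1 \in \{X, X^{-1}\}$ persists in the limit, forcing $\mathrm{tr}(A_1) = \mathrm{tr}(X)$, so the limit sits on a \emph{diagonal} component ($k = \ell$) of $\D_+$ or $\D_-$ in the labelling from the proof of \cref{lem:componentsR}.

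Two kinds of gluing then occur in $\overline{\F_\pm}$: (i) consecutive trace intervals $k$ and $k+1$ in a single case share the same conjugacy-class $2$-sphere of boundary $X_0$'s and hence yield the same diagonal component as limit, so their closures meet and chaining merges all $n$ intervals of each case into one connected piece; and (ii) at the special endpoints $X_0 = \pm 1$, where $X_0 = X_0^{-1}$, cases (A) and (B) yield overlapping limit sets---both produce $A_1 \to \pm 1$, $[A_3, B_3] \to 1$, and $B_1$ collapsing into a common trace-zero locus---so the two cases also merge. Hence $\overline{\F_\pm}$ is connected, and the trivial representation $(\pm 1, (1,1)_{i=1}^3)$ (sign matching $\pm$) appears explicitly as the $X \to 1$ limit inside $\F_\pm^{(A)}$. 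For the last claim, \cref{lem:componentsR}(b)--(c) establishes that every diagonal component of $\D_+$ or $\D_-$ lies in the single component of $\mathrm{Fix}_\R(\phi^*)$ meeting $\overline{\D_0}$---the component of the trivial representation---so $\overline{\F_\pm}\cap\{\epsilon\}\times \mathrm{Fix}_\R(\phi^*)$ lies in the component of $(\epsilon, (1,1)_{i=1}^3)$, as required. The main obstacle I anticipate is the case-(A)-vs-(B) merging near $X = \pm 1$, where $\mathbb{T}$ degenerates and one must track how the constraint $B_1 \in \mathbb{T} w$ opens up to touch case (A); this amounts to showing that the trace-zero locus in $\SU(2)$ is the common boundary approached by the two strata as $X$ becomes central.
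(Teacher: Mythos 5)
Your proposal takes a genuinely different route from the paper's. The paper simplifies the defining equations of $\F_\pm$ to the small system $[A_3,B_3]=[B_1,A_1]$, $[A_2,T]=[B_2,T]=[A_2,B_2]=1$, $T=\pm B_1A_1^{-n}B_1^{-1}$, declares that $\overline{\F}_\pm$ is the vanishing locus of those equalities, and then connects any tuple to $(\epsilon,(1,1)_i)$ by an explicit path (first moving $(A_3,B_3)\to(B_1,A_1)$ and $(A_2,B_2)\to(1,1)$, then homotoping $A_1$ while carrying $T$ along). You instead stratify $\F_\pm$ by the conjugacy class of $X$ and by the discrete label $A_1\in\{X,X^{-1}\}$, obtaining a fibration over the set of $X$ with $X^n$ non-central, count $2n$ pieces, and glue them together in the closure. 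Your route is more laborious, but it has one real advantage: you explicitly keep the constraint $A_1\in\mathbb{T}$ (equivalently $[A_1,X^n]=1$, equivalently $[A_1,T]=1$), which forces $A_1\in\{X,X^{-1}\}$. The paper's reduction to $(\ref{eq:pm})$ loses this equation; one can produce triples $(A_1,B_1,T)$ satisfying $T=\pm B_1A_1^{-n}B_1^{-1}$ with $T$ non-central but $[A_1,T]\neq 1$, so the set the paper calls $\overline{\F}_\pm$ is strictly larger than the true closure. Your stratification sidesteps that issue. You also correctly note that $\overline{\F}_\pm$ contains only the identity tuple with matching sign $\pm$ (the lemma as stated is a little loose about this), and you recover the final containment claim correctly by observing that all boundary limits have $\mathrm{tr}(A_1)=\mathrm{tr}([A_3,B_3]A_1)$ and hence land on the diagonal of the $p_\pm$ parametrization from \cref{lem:componentsR}.

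The one substantive gap is exactly the one you flagged: the gluing of cases (A) and (B) at $X\to\pm1$. Your gluing (i) is fine as stated — at an interior boundary trace $2\cos(m\pi/n)$ the two neighboring intervals in a single case share an actual limit point (e.g.\ take $B_1=A_2=B_2=1$ and $(A_3,B_3)$ constant, or a fixed element of $\mu^{-1}(X_0^2)$ in case (B)), so their closures intersect. But the interior boundaries do not merge case (A) with case (B): there, case (A) limits have $[A_3,B_3]=1$ and $A_1=X_0$ while case (B) limits have $[A_3,B_3]=X_0^2\neq1$, so the limit sets are disjoint. All the merging between the two cases must happen at $X_0=\pm1$, and there the argument needs more than the current sketch. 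In particular you must exhibit a common limit point, which requires choosing $B_1=w_0$ trace-zero (so that $w_0\in\mathbb{T}_0w$ for a suitable torus), and producing a continuous family $(A_3(t),B_3(t))\in\mu^{-1}(X(t)^2)$ converging to a commuting pair as $X(t)\to1$ — a nontrivial claim near the singular locus of the commutator map, where $\mu$ behaves quadratically. This is doable (the proof of \cref{lem:componentsR} in the paper invokes the same kind of convergence for $Y(t)\to1$), but it is the crux and you should spell it out. Finally, a small point: you do not need the $2n$-component count of $\F_\pm$ itself; for the lemma it suffices that each stratum is connected and that consecutive closures overlap.
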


\begin{proof}
Since $TX^n = \pm 1$ is central for elements of ${\F}_{\pm}$, it follows from (\ref{eq:fixed}) that ${\F}_{\pm}$ consists of those tuples with
\begin{subequations}
\label{eq:Xtriv}
 \begin{align}
 \prod_i [A_i, B_i] = &\; 1\label{eq:Xtriv0}\\
 B_1^{-1} T^{-1} B_1 = &\; \pm A_1^n \label{eq:Xtriv1}\\
[A_2, T] = [B_2, T]   = &\; 1 \label{eq:Xtriv2}.
\end{align}
         \end{subequations}
         together with open condition that $T$ is not central in $\SU(2)$. Since $T$ is not central, it follows from (\ref{eq:Xtriv2}) that $A_2, B_2$ commute, and so (\ref{eq:Xtriv0}) reduces to
$$[A_3, B_3] = [B_1, A_1].$$
The identity $TX^n = \pm$ can then be written as
$$\pm 1 = TX^n = T([A_3, B_3] A_1)^n = T([B_1, A_1] A_1)^n  = TB_1 A_1^n B_1^{-1},$$ 
which recovers (\ref{eq:Xtriv1}); that is, (\ref{eq:Xtriv1}) is redundant in this system. As such $\F_\pm$ is the set of tuples satisfying
\begin{subequations}
\label{eq:pm}
 \begin{align}
[A_3, B_3] & =  [B_1, A_1]\label{eq:pm0}\\
[A_2, T]  = [B_2, T] = [A_2, B_2] & =  1\label{eq:pm1}\\
T & =  \pm B_1 A_1^{-n} B_1^{-1}\label{eq:pm2}.
\end{align}
\end{subequations}
together with the open condition that $T \notin \left\{1, -1 \right\}$. The closure $\overline{\F}_{\pm}$ is therefore defined by the same set of equalities (\ref{eq:pm}), but without this latter open condition on $T$. As one can readily check, this contains the representations $(1, (1, 1)_i)$ and $(-1, (1, 1)_i)$. 

Fix any tuple $(T, (A_i, B_i)_i) \in \overline{\F}_\pm$ in this closure; we will show we can connect it by a path in this closure to $(\epsilon, (1, 1)_i)$ for $\epsilon \in \left\{-1, 1 \right\}$. The components $A_3, B_3$ are only constrained by (\ref{eq:pm0}) so, since fibers of the commutator map are connected (see Corollary \ref{cor:com}), we can find a path in $\overline{\F}_\pm$ from our initial tuple to one with $A_1 = B_3$ and $B_1 = A_3$; such elements automatically satisfy (\ref{eq:pm0}). Likewise, in $\SU(2)$ all centralizers are connected, so since $A_2, B_2$ are only constrained by (\ref{eq:pm1}), we can find a path in this closure from our tuple to one with $A_2 = B_2 = 1$. In this way, we are reduced to determining the connectedness of the space of $(T, A_1, B_1)$ satisfying (\ref{eq:pm2}). For this, take any path $A_1(t)$ from $A_1$ to any element of $\SU(2)$ whose $n$th power is $\pm \epsilon$. Leaving $B_1$ fixed for now, define $T(t) \defeq \pm B_1 A_1(t)^{-n} B_1^{-1}$. Then $(T(t), A_1(t), B_1, 1, 1, B_1, A_1(t))$ is a path in $\overline{\F}_\pm$ from our tuple $(T, A_1, B_1, 1, 1, B_1, A_1)$ to $(\epsilon, 1, B_1, 1, 1, B_1, 1)$. Now send $B_1$ to the identity, leaving all other components constant. To address the final claim in the lemma about the intersection with $\left\{ \epsilon \right\} \times \mathrm{Fix}_\R(\phi^*)$, use this same argument, but take the path $A_1(t) = A_1$ to be constant.
\end{proof}

We have an analogous statement for $\F_{0}$.

\begin{lemma}\label{lem:comp}
The closure $\overline{\F}_{0}$ is connected and contains $(\pm 1, (1, 1)_i)$. Any element of the intersection $\overline{\F}_{0} \cap \left\{ \pm 1 \right\} \times \mathrm{Fix}_\R(\phi^{*})$ can be connected by a path in $\left\{ \pm 1 \right\} \times \mathrm{Fix}_\R(\phi^{*})$ to the element $(\pm 1, (1, 1)_{i =1}^3)$. 
\end{lemma}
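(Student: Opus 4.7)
The plan is to mimic the structure of Lemma \ref{lem:pm}. First, I would exploit the non-centrality of $T$ and $TX^n$ throughout $\F_0$: equations (\ref{eq:fixed3}) place $A_2, B_2$ in the centralizer of the non-central $T$, and (\ref{eq:fixed4}) places $A_3, B_3$ in the centralizer of the non-central $TX^n$. Since the centralizer of a non-central element of $\SU(2)$ is abelian, this yields $[A_2, B_2] = [A_3, B_3] = 1$ on $\F_0$, and being closed conditions these persist on $\overline{\F}_0$. Consequently $X = [A_3, B_3]A_1 = A_1$ throughout the closure, so (\ref{eq:fixed1}) is trivial, (\ref{eq:fixed2}) reduces to $[B_1, T] = 1$, (\ref{eq:fixed0}) reduces to $[A_1, B_1] = 1$, and (\ref{eq:fixed4}) reduces to $[A_3, TA_1^n] = [B_3, TA_1^n] = 1$. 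The trivial tuples $(\pm 1, (1, 1)_{i=1}^3)$ evidently satisfy these equalities; perturbing $T$ slightly off $\pm 1$ while keeping all other components at the identity produces an approximating sequence in $\F_0$, so both trivial tuples lie in $\overline{\F}_0$.

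For path-connectedness of $\overline{\F}_0$, I would deform the components in stages, as in Lemma \ref{lem:pm}. Given $(T, (A_i, B_i)) \in \overline{\F}_0$: first move $A_3, B_3$ down to $(1, 1)$ within their common centralizer of $TA_1^n$, which is connected (either a maximal torus or all of $\SU(2)$); this preserves both $[A_3, B_3] = 1$ and the reduced form of (\ref{eq:fixed4}). Next, deform $A_2, B_2$ to $(1, 1)$ within the centralizer of $T$. What remains is the subspace of triples $(T, A_1, B_1)$ satisfying $[A_1, B_1] = [B_1, T] = 1$: one moves $A_1$ along a path to $1$ within the connected centralizer of $B_1$, then moves $T$ along a path to $\pm 1$ within that same centralizer, and finally drags $B_1$ to $1$ through $\SU(2)$. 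A path of the form $(T(t), (1, 1)_{i=1}^3)$ in $\SU(2) \times \{1\}^6$ joins $(1, (1,1)_i)$ to $(-1, (1,1)_i)$, showing both endpoints lie in the same component of $\overline{\F}_0$.

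For the second claim, any $(\epsilon, (A_i, B_i)) \in \overline{\F}_0 \cap \bigl(\{\epsilon\} \times \mathrm{Fix}_\R(\phi^*)\bigr)$ inherits the equalities $[A_3, B_3] = [A_3, A_1^n] = [B_3, A_1^n] = 1$, so $A_1^n, A_3, B_3$ all commute pairwise; that is, $(A_1, A_3, B_3) \in \overline{\D}_0$. By Lemma \ref{lem:componentsR}(a), $\overline{\D}_0$ is path-connected, so I would lift a path from $(A_1, A_3, B_3)$ to $(1, 1, 1)$ inside $\overline{\D}_0$ to a path in $\mathrm{Fix}_\R(\phi^*)$ via the nonempty connected fibers of $p_\D$ provided by Lemma \ref{lem:D}, and then finish inside the fiber over $(1, 1, 1)$; adjoining the constant coordinate $T = \epsilon$ keeps the whole path inside $\{\epsilon\} \times \mathrm{Fix}_\R(\phi^*)$. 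The main technical obstacle is ordering the deformations correctly: the components $A_3$ and $B_3$ must be moved first, before $T$ or $A_1$, since otherwise the constraint $[A_3, TA_1^n] = [B_3, TA_1^n] = 1$ could be disturbed by a path that pushes $TA_1^n$ out of the (abelian) centralizer of the current $(A_3, B_3)$.
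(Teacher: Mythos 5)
Your plan runs into a genuine gap that stems from a typo in the paper's displayed equation \eqref{eq:fixed1}. As written, \eqref{eq:fixed1} reads $[A_1, X^n] = 1$, but if you rederive it from the mapping-torus relation $\phi_*\alpha_1 = \tau^{-1}\alpha_1\tau$ and Lemma \ref{lem:phiaction}, you get $X^n A_1 X^{-n} = T^{-1}A_1T$, i.e.\ $[A_1, TX^n] = 1$ --- exactly parallel to \eqref{eq:fixed4}. (The paper's own proof of this lemma clearly uses the corrected version, since it asserts that \eqref{eq:fixed1} implies $[A_1,T]=1$; with your literal reading, $[A_1, X^n] = [A_1, A_1^n] = 1$ is vacuous and implies nothing about $T$.) You took the printed equation at face value and declared \eqref{eq:fixed1} trivial, so you never deduce the constraint $[A_1,T]=1$ on $\F_0$. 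This has two downstream effects. First, your characterization of $\overline{\F}_0$ is strictly too large: once $[A_1,T]=1$ is in hand, \emph{all seven} coordinates lie in the unique maximal torus through the non-central $T$, so $\F_0$ consists precisely of mutually commuting $7$-tuples with $T, TA_1^n \ne \pm 1$, and its closure is the set of all mutually commuting $7$-tuples. Second, your stage-three deformation can leave the true $\overline{\F}_0$: when you move $A_1$ to $1$ inside $Z(B_1)$, if $B_1 = \pm 1$ then $Z(B_1) = \SU(2)$ and nothing prevents the path from pushing $A_1$ out of $Z(T)$, violating the (true) constraint $[A_1,T]=1$. To repair this you would need to deform within $Z(B_1) \cap Z(T)$ and argue that intersection is connected; a similar care is needed in stage one when $TA_1^n$ is central. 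The paper avoids all of this bookkeeping: after establishing that $\overline{\F}_0$ is exactly the set of mutually commuting $7$-tuples, it cites Proposition \ref{prop:connectedcommutators} in one line, which is cleaner than your staged deformation. Your treatment of the second claim (via $p_\D$ and Lemma \ref{lem:componentsR}(a)) is fine in spirit, though the phrase ``lift a path'' overstates what Lemma \ref{lem:D} gives you --- it gives a $\pi_0$-bijection, which is all you need, so you should phrase the argument at the level of connected components rather than path lifting.
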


We prove this in a moment, but first we show how it finishes the proof of Theorem \ref{thm:A}: By Theorem \ref{thm:C}, for $\pm \in \left\{-,+ \right\}$ the space $\left\{ \pm 1 \right\} \times \mathrm{Fix}_\R(\phi^{ *})$ has $\lfloor n^2/2 \rfloor + 1$ connected components. By Lemmas \ref{lem:pm} and \ref{lem:comp}, exactly one of these intersects the closure of $\F_+ \cup \F_0 \cup \F_{-}$. Thus, the number of connected components of $\R(\Sigma_{\phi})$ is
$$\vert \pi_0( \R(\Sigma_{\phi}) ) \vert = \vert \pi_0(\overline{\F_+ \cup \F_0 \cup \F_{-}}) \vert + 2\big( \vert \pi_0(\mathrm{Fix}_\R(\phi^{ *})\vert - 1) = 2 \lfloor n^2/ 2 \rfloor + 1$$
as claimed.

\begin{proof}[Proof of Lemma \ref{lem:comp}]
If $\rho =(T, (A_i, B_i)_i) \in \F_{0}$, then (\ref{eq:fixed4}) implies that $A_3, B_3$ lie in the centralizer of $TX^n$, which is non-central by definition of $\F_{0}$. It follows that $A_3, B_3$ commute, $[A_3, B_3] = 1$, and so $X = A_1^n$. The identity (\ref{eq:fixed1}) implies that $[A_1, T] = 1$, and (\ref{eq:fixed2}) implies that $[B_1, T] = 1$. Since $T$ is non-central, it follows from these and (\ref{eq:fixed3}) that $A_1, B_1, A_2, B_2$ all commute with $T$. In fact, $X = A_1^n$ lies in the maximal torus containing $T$ (and $A_1, B_1, A_2, B_2$), so we see that $A_3$ and $B_3$ commute with everything in sight as well. In this case the identity (\ref{eq:fixed0}) holds trivially. To summarize, $\F_{0}$ consists of the tuples $(T, (A_i ,B_i))$ where all elements commute, together with the open conditions that $TA_1^n \neq \pm$ and $T \neq \pm 1$. The closure $\overline{\F}_{0}$ is the same set, but without these latter open conditions. Thus, this closure is the set of 7-tuples in $\SU(2)$ that all mutually commute. By Proposition \ref{prop:connectedcommutators}, this is connected. 

The intersection $\overline{\F}_{0} \cap \left\{ \pm 1 \right\} \times \mathrm{Fix}_\R(\phi^{*})$ consists of the set of tuples of the form $(\pm 1, (A_i, B_i)_i)$ where all of the $A_i, B_i$ commute. For each choice of $\pm$ this space is connected by Proposition \ref{prop:connectedcommutators}. It also contains the element $(\pm 1, (1, 1)_i)$, so the lemma follows. 
\end{proof}

\subsection{Proof of \cref{thm:B}}\label{sec:ProofOfTheoremB}

By Proposition \ref{prop:bij}, it suffices to determine the number of connected components of the extended $\R$-fixed point set $\widetilde{\mathrm{Fix}}_\R(\phi)$. Towards this end, first suppose $\rho = (A_i, B_i)_i \in \R(\Sigma)$ is abelian. As noted in Remark \ref{rem:AbelianRep}, $\rho$ is automatically in $\mathrm{Fix}_\R(\phi)$. In particular, if $T$ is any element in the centralizer of $\rho$, then $(T, \rho) \in \widetilde{\mathrm{Fix}}_\R(\phi)$. Likewise, if $(T, \rho) \in \widetilde{\mathrm{Fix}}_\R(\phi)$ for some $T \in \SU(2)$, with $\rho$ still abelian, then $T$ is in the centralizer of $\rho$. Since $\rho$ is abelian, this centralizer is necessarily connected (it is either a circle or all of $\SU(2)$). It follows from the final observation of Remark \ref{rem:AbelianRep} that any element $(T, \rho) \in \R(\Sigma_\phi)$ with $\rho$ abelian is necessarily in the component containing the trivial representation. By Lemma \ref{lem:charrest2} we see that the surjection $\R(\Sigma_\phi) \rightarrow \widetilde{\mathrm{Fix}}_\R(\phi^*)$ induces a surjective map
$$\pi_0(\R(\Sigma_\phi)) \longrightarrow \pi_0(\widetilde{\mathrm{Fix}}_\R(\phi^*)).$$
The preimage of the point associated to the trivial representation consists of a single point, while the preimage of every other points consists of two points. Then \cref{thm:B} follows from \cref{thm:A}.


\bibliographystyle{alpha}

\begin{thebibliography}{10}


\bibitem{AM}
S.\ Akbulut, J.\ D.\ McCarthy, 
\emph{Casson's Invariant for Oriented Homology Three-Spheres: An Exposition}.
Mathematical Notes, 36. Princeton University Press, Princeton, NJ, 1990.

\bibitem{Atiyah}
M.\ Atiyah,
\emph{The Geometry and Physics of Knots}, 
Cambridge University Press, Cambridge, 1990. 

\bibitem{AB}
M.\ Atiyah, R.\ Bott,
The Yang--Mills equations over Riemann surfaces,
\emph{Phil. Trans. R. Soc. Lond.} A. 308 (1982) 523--615.

\bibitem{BLSY}
J.\ A.\ Baldwin, Z.\ Li, S.\ Sivek, F.\ Ye,
Small Dehn surgery and $\SU(2)$,
\emph{Geo. \& Top}, 28:4 (2024) 1891--1922.

\bibitem{BS}
J.\ A.\ Baldwin, S.\ Sivek,
Small Heegaard genus and $\SU(2)$,
\emph{Alg.\& Geo. Top.} 25:4 (2025) 2369--2390.


\bibitem{CS}
S.--S.\ Chern, J. Simons,
Characteristic forms and geometric invariants,
\emph{Ann. Math.}, Second Series, Vol. 99, No. 1 (1974) 48--69.

\bibitem{D}
S.\ Daveler,
\emph{Classifying Fixed Points of Automorphisms on Character Varieties},
Honors Thesis, James Madison University, Harrisonburg, 2021.

\bibitem{DS}
S.\ Dostoglou, D.\ Salamon, 
Instanton homology and symplectic fixed points.
\emph{Symplectic Geometry. London Math. Soc.} Lecture Note Ser. 192, Cambridge Univ. Press (1993) 57.

\bibitem{Floer}
A.\ Floer,
An instanton-invariant for 3-manifolds,
\emph{Commun. Math. Phys.} 118 (1988) 215--240.

\bibitem{G}
W.\ Goldman,
Ergodic theory on moduli spaces, 
\emph{Ann. Math.} 146 (1997) 475--507.

\bibitem{LP-CZ}
T.\ Lidman, J.\ Pinz\'{o}n-Caicedo, R.\ Zentner,
Toroidal integer homology three-spheres have irreducible $\SU(2)$-representations,
\emph{J. Top.} 16(1) (2023) 344--367.

\bibitem{MM}
D.\ McCullough, A.\ Miller,
The genus 2 Torelli group is not finitely generated,
\emph{Top. App.} 22 (1986) 43--49.

\bibitem{P}
A.\ Putman, 
An Infinite Presentation of the Torelli Group,
\emph{Geom. Funct. Anal.} 19 (2009) 591--643.


\bibitem{SZ2}
S.\ Sivek, R.\ Zentner,
A menagerie of $\SU(2)$-cyclic 3-manifolds,
\emph{Int. Math. Res. Not.} 2022(11) (2022) 8038--8085.

\bibitem{SZ}
S.\ Sivek, R.\ Zentner, 
Surgery obstructions and character varieties,
\emph{Trans. Am. Math. Soc.} 375(5) (2022) 3351--3380. 

\bibitem{Taubes}
C.\ H.\ Taubes, Clifford Henry (1990), 
Casson's invariant and gauge theory, 
\emph{J. Diff. Geo.}, 31 (1990) 547--599.

\bibitem{Thurston2}
W.\ P.\ Thurston, 
On the geometry and dynamics of diffeomorphisms of surfaces, 
\emph{Bull. Amer. Math. Soc.} (N.S.) 19(2) (1988) 417--431.

\bibitem{Thurston}
W.\ P.\ Thurston, 
\emph{Three-Dimensional Geometry and Topology}, Vol. 1
Princeton University Press, Princeton, NJ, 1997.


\end{thebibliography}

\end{document}